\documentclass[a4paper]{ieeeconf}

\usepackage{times,mathptmx}
\usepackage{latexsym,amsmath,amsfonts}

\interdisplaylinepenalty=2500

\IEEEoverridecommandlockouts

\overrideIEEEmargins


\newcommand{\R}{\mathbb{R}}
\newcommand{\T}{\mathbb{T}}

\newcommand{\Z}{\mathbb{Z}}


\newtheorem{ex}{Example}
\newtheorem{theorem}{Theorem}
\newtheorem{lemma}{Lemma}
\newtheorem{corollary}{Corollary}
\newtheorem{definition}{Definition}
\newtheorem{remark}{Remark}


\begin{document}

\title{\LARGE \bf
The Second Euler-Lagrange Equation\\
of Variational Calculus on Time Scales\thanks{Submitted 26-May-2009;
Revised 12-Jan-2010; Accepted 29-March-2010 in revised form;
for publication in the \emph{European Journal of Control}.}}

\author{Zbigniew Bartosiewicz, Nat\'{a}lia Martins,
and Delfim F. M. Torres%
\thanks{Corresponding author:
Nat\'{a}lia Martins ({\tt\small natalia@ua.pt})}%
\thanks{This work was partially presented at the
\emph{Workshop in Control, Nonsmooth Analysis and Optimization},
celebrating Francis Clarke's and Richard Vinter's 60th birthday,
Porto, May 4-8, 2009.}%
\thanks{Z. Bartosiewicz is with Faculty of Computer Science,
Bia{\l}ystok University of Technology, 15-351 Bia\l ystok, Poland.
{\tt\small z.bartosiewicz@pb.edu.pl}}%
\thanks{N. Martins is with the Department of Mathematics,
University of Aveiro, 3810-193 Aveiro, Portugal.
{\tt\small natalia@ua.pt}}%
\thanks{D. F. M. Torres is with the Department of Mathematics,
University of Aveiro, 3810-193 Aveiro, Portugal.
{\tt\small delfim@ua.pt}}}

\maketitle


\begin{abstract}
The fundamental problem of the calculus of variations
on time scales concerns the minimization of a delta-integral
over all trajectories satisfying given boundary conditions.
In this paper we prove the second Euler-Lagrange necessary
optimality condition for optimal trajectories
of variational problems on time scales.
As an example of application of the main result,
we give an alternative and simpler proof to the
Noether theorem on time scales recently obtained in
[J. Math. Anal. Appl. 342 (2008), no.~2, 1220--1226].
\end{abstract}

\smallskip

\noindent \textbf{Mathematics Subject Classification 2000:} 49K05, 39A12.

\smallskip


\smallskip

\noindent \textbf{Keywords:} calculus of variations; optimal control;
Euler-Lagrange, DuBois-Reymond, and second Erdmann
necessary optimality conditions; Noether's theorem; time scales.


\section{INTRODUCTION}

The calculus on time scales is a recent field,
introduced by Bernd Aulbach and Stefan Hilger in 1988
\cite{Hilger}, that unifies and extends difference and differential equations
into a single theory \cite{bookBohner}. A time scale is a model of time,
and the new theory has found important applications in several fields
that require simultaneous modeling of discrete and continuous data,
in particular in the calculus of variations
\cite{Ahlbrandt,comRicardoISO:nabla,Atici,BohnerCV04,Tangier-with-Rui,%
comRui:TS:Lisboa07,CVts,abmalina:t,Mal:Tor:Wei,NataliaHigherOrderNabla},
control theory \cite{Barto:et:al,BartoPaw,BartoPaw:IJMS,BPW,D1,MozBart},
and optimal control \cite{CV:WMP,SSW,ZZ:WW:HX}.
Other areas of application include
engineering, biology, economics, finance,
and physics \cite{ts:survey,bookBohner}.

The present work is dedicated
to the study of problems of calculus of variations on an arbitrary
time scale $\mathbb{T}$. As particular cases,
one gets the standard calculus of variations
\cite{Clar83,book:Vinter} by choosing $\mathbb{T} = \mathbb{R}$;
the discrete-time calculus of variations \cite{KP,discreteCAO03}
by choosing $\mathbb{T} = \mathbb{Z}$;
and the $q$-calculus of variations
\cite{Bang:q-calc} by choosing $\mathbb{T} =
q^{\mathbb{N}_0}:=\{q^k | k \in \mathbb{N}_0\}$, $q>1$.
In Section~\ref{sec:prel:ts} we briefly present the necessary notions
and results of time scales, delta derivatives, and delta integrals.

Let $\mathbb{T}$ be a given time
scale with at least three points, $n \in \mathbb{N}$,
and $L: \mathbb{R}\times \mathbb{R}^n \times \mathbb{R}^n
\rightarrow \mathbb{R}$ be of class $C^1$.
Suppose that $a,b\in \mathbb{T}$ and $a<b$.
We consider the following optimization problem on $\mathbb{T}$:
\begin{equation}
\label{problem}
I[q]=\int_a^b L(t,q^\sigma(t),q^\Delta(t)) \Delta t
\longrightarrow \min_{q\in \mathcal{D}},
\end{equation}
where
\[
\mathcal{D}=\{ q\ | \ q: [a,b]\cap \mathbb{T}
\rightarrow \mathbb{R}^n,\ q\in \mathrm{C}^1_{rd},\
q(a)=q_a,\ q(b)=q_b\}
\]
for some $q_a, q_b \in \mathbb{R}^n$, and
where $\sigma$ is the forward jump operator and $q^\Delta$
is the delta-derivative of $q$ with respect to $\mathbb{T}$.
For $\mathbb{T} = \mathbb{R}$ we get the classical fundamental
problem of the calculus of variations,
which concerns the minimization of an integral
\begin{equation}
\label{eq:cp}
I[q]=\int_a^b L(t,q(t),\dot{q}(t)) dt
\end{equation}
over all trajectories $q\in \mathrm{C}^1$
satisfying given boundary conditions
$q(a)=q_a$ and $q(b)=q_b$.
Several classical results on the calculus of variations
are now available to the more general context of time scales:
(first) Euler-Lagrange equations \cite{Atici,BohnerCV04,CVts};
necessary optimality conditions for isoperimetric problems
\cite{comRicardoISO:nabla,abmalina:t} and for problems
with higher-order derivatives
\cite{comRui:TS:Lisboa07,NataliaHigherOrderNabla};
the Weierstrass necessary condition \cite{Mal:Tor:Wei};
and Noether's symmetry theorem \cite{NT:ts}.
In this paper we prove a new result for the problem
of the calculus of variations on time scales:
we obtain in Section~\ref{sec:MR} a time scale version of the
classical \emph{second Euler-Lagrange equation}
\cite{book:Troutman}, also known in the literature as the
DuBois-Reymond necessary optimality condition \cite{Cesari}.

The classical second Euler-Lagrange equation asserts that if $q$
is a minimizer of \eqref{eq:cp}, then
\begin{multline}
\label{sec:class:EL}
\frac{d}{dt}\left[-L(t,q(t), \dot{q}(t))
+ \partial_3L(t,q(t), \dot{q}(t))\cdot \dot{q}(t)\right] \\
= - \partial_1 L (t,q(t), \dot{q}(t))\, ,
\end{multline}
where $\partial_i L$, $i = 1, 2, 3$, denotes the
partial derivative of $L(\cdot,\cdot,\cdot)$
with respect to its $i$-th argument. In the autonomous case,
when the Lagrangian $L$ does not depend on the time variable $t$,
the second Euler-Lagrange condition \eqref{sec:class:EL}
is nothing more than the second Erdmann necessary optimality condition:
\begin{equation}
\label{eq:cla:2nd:Erd}
- L(q(t), \dot{q}(t))
+ \partial_3L(q(t), \dot{q}(t))\cdot \dot{q}(t) = const
\end{equation}
along all the extremals of the problem,
which in mechanics corresponds to the most famous
conservation law---conservation of energy.
For a survey of the classical optimality conditions
we refer the reader to \cite[Ch.~2]{Clar89}.
Here we just recall that \eqref{sec:class:EL}
is one of the cornerstone results of the calculus of variations
and optimal control \cite{Caratheodory}:
it has been used, for example, to prove existence,
regularity of minimizers, conservation laws,
and to explain the Lavrentiev phenomena.

Main result of the paper gives an extension of \eqref{sec:class:EL}
to an arbitrary time scale (\textrm{cf.} Theorem~\ref{secondEL}):
if $q$ is a solution of problem \eqref{problem}, then
\begin{multline}
\label{eq:new:DRC}
\frac{\Delta}{\Delta t} \Bigl[
-L(t,q^\sigma(t),q^\Delta(t))
+ \partial_3 L(t,q^\sigma(t),q^\Delta(t)) q^\Delta(t)\\
+ \partial_1 L(t,q^\sigma(t),q^\Delta(t)) \mu(t)
\Bigr] = - \partial_1 L(t,q^\sigma(t),q^\Delta(t)) \, .
\end{multline}
As an application, we show in Section~\ref{sec:appl:NT}
how one can use the new second Euler-Lagrange equation
\eqref{eq:new:DRC} to prove the Noether's principle
on time scales: the invariance of functional \eqref{problem}
with respect to a one-parameter family of transformations
implies the existence of a conserved
quantity along the time scale Euler-Lagrange extremals
(Theorem~\ref{Noether}). When problem \eqref{problem}
is autonomous one has invariance with respect to time translations
and the corresponding Noether's conservation law gives
an extension of the second Erdmann equation
\eqref{eq:cla:2nd:Erd} to time scales:
\begin{equation}
\label{eq:cons:Eng}
-L(q^\sigma(t),q^\Delta(t))
+ \partial_3 L(q^\sigma(t),q^\Delta(t)) \cdot q^\Delta(t) = const \, .
\end{equation}
In classical mechanics \eqref{eq:cla:2nd:Erd}
gives conservation of energy. The conservation law
\eqref{eq:cons:Eng} tells us that an analogous result
remains valid on an arbitrary time scale.
However, the role of the classical
Hamiltonian $H(t,u,v)=-L(t,u,v) + \partial_3 L(t,u,v)v$
in \eqref{sec:class:EL} is substituted
by a time-scale Hamiltonian
$$
\mathcal{H}(t,u,v)=-L(t,u,v)
+ \partial_3 L(t,u,v)v + \partial_1 L(t,u,v) \mu(t)
$$
in \eqref{eq:new:DRC}, \textrm{i.e.},
$\mathcal{H}(t,u,v)=H(t,u,v) + \partial_1 L(t,u,v) \mu(t)$,
with the new term
$\partial_1 L(t,q^\sigma(t),q^\Delta(t)) \mu(t)$
on the left-hand side of \eqref{eq:new:DRC}
depending on the graininess of the time scale.


\section{PRELIMINARIES ON TIME SCALES}
\label{sec:prel:ts}

For a general introduction to the calculus on time scales we refer
the reader to the book \cite{bookBohner}.
Here we only give those notions and results needed in the sequel.
As usual,  $\mathbb{R}$, $\mathbb{Z}$,
and $\mathbb{N}$ denote, respectively, the set of real,
integer, and natural numbers.

A {\it time scale} $\mathbb{T}$ is an arbitrary nonempty closed subset
of  $\R$.  Besides standard
cases of $\mathbb{R}$ (continuous time) and $\mathbb{Z}$ (discrete time),
many different models of time are used. For each time scale $\mathbb{T}$
the following operators are used:

\begin{itemize}
\item the {\it forward jump operator} $\sigma:\T \rightarrow \T$,
$\sigma(t):=\inf\{s \in \T:s>t\}$ for $t\neq\sup \T$ and
$\sigma(\sup\T)=\sup\T$ if $\sup\T<+\infty$;

\item the {\it backward jump operator} $\rho:\T \rightarrow \T$,
$\rho(t):=\sup\{s \in \T:s<t\}$ for $t\neq\inf \T$ and
$\rho(\inf\T)=\inf\T$ if $\inf\T>-\infty$;

\item the {\it forward graininess function} $\mu:\T \rightarrow [0,\infty[$,
$\mu(t):=\sigma(t)-t$.
\end{itemize}

\begin{ex}
If $\mathbb{T}=\mathbb{R}$, then for any $t \in \mathbb{R}$,
$\sigma(t)=t=\rho(t)$ and $\mu(t) \equiv 0$.
If $\T=\Z$, then for every $t \in \Z$,
$\sigma(t)=t+1$, $\rho(t)=t-1$ and $\mu(t) \equiv 1$.
\end{ex}

A point $t\in\mathbb{T}$ is called \emph{right-dense},
\emph{right-scattered}, \emph{left-dense} or
\emph{left-scattered} if $\sigma(t)=t$, $\sigma(t)>t$, $\rho(t)=t$,
or $\rho(t)<t$, respectively. We say that $t$ is \emph{isolated}
if $\rho(t)<t<\sigma(t)$, that $t$ is \emph{dense} if $\rho(t)=t=\sigma(t)$.

If $\sup \T$ is finite and left-scattered, we define
$$
\mathbb{T}^\kappa :=\mathbb{T}\setminus \{\sup\T\} \, .
$$
Otherwise, $\mathbb{T}^\kappa :=\mathbb{T}$.

\begin{definition}
Let $f:\T \rightarrow \R$ and $t \in \T^\kappa$. The {\it delta derivative}
of $f$ at $t$ is the real number $f^{\Delta}(t)$ with the property
that given any $\varepsilon > 0$ there is a neighborhood $U$ of $t$
such that
\[|(f(\sigma(t))-f(s))-f^{\Delta}(t)(\sigma(t)-s)| \leq \varepsilon|\sigma(t)-s|\]
for all $s \in U$. We say that $f$ is {\it delta differentiable}
on $\T$ provided $f^{\Delta}(t)$ exists for all $t \in \T^\kappa$.
\end{definition}

We shall often denote $f^\Delta(t)$ by $\frac{\Delta}{\Delta t} f(t)$
if $f$ is a composition of other functions. The delta derivative
of a function $f:\T \rightarrow \R^n$ ($n \in \mathbb{N}$)
is a vector whose components are
delta derivatives of the components of $f$. For $f:\T \rightarrow X$,
where $X$ is an arbitrary set, we define $f^\sigma:=f\circ\sigma$.

For delta differentiable $f$ and $g$, the next formulas hold:

\begin{align*}
f^\sigma(t)&=f(t)+\mu(t)f^\Delta(t) \, ,\\
(fg)^\Delta(t)&=f^\Delta(t)g^\sigma(t)+f(t)g^\Delta(t)\\
&=f^\Delta(t)g(t)+f^\sigma(t)g^\Delta(t).
\end{align*}

\begin{remark}
If $\T=\R$, then $f:\R \rightarrow \R$ is
delta differentiable at $t \in \R$ if and only if $f$ is differentiable in
the ordinary sense at $t$. Then, $f^{\Delta}(t)=\frac{d}{dt}f(t)$.
If $\T=\Z$, then $f:\Z \rightarrow \R$ is always delta differentiable
at every $t \in \Z$ with $f^{\Delta}(t)=f(t+1)-f(t)$.
\end{remark}

Let $a,b \in \mathbb{T}$, $a<b$.
We define the interval $[a,b]_\T$ in $\mathbb{T}$ by
$$[a,b]_\T :=\{ t \in \mathbb{T}: a\leq t\leq b\}.$$
Open intervals and half-open intervals
in $\mathbb{T}$ are defined accordingly.

\begin{theorem}[Corollary~2.9 of \cite{Guseinov:2002}]
\label{monotonia delta}
Let $f:[a,b]_\T\rightarrow\mathbb{R}$ be a continuous
function that has a delta derivative at each point
of $[a,b]_\T^\kappa$. Then $f$ is increasing, decreasing,
non-decreasing, and non-increasing on $[a,b]_\T$ if
$f^{\Delta}(t)>0$, $f^{\Delta}(t)<0$, $f^{\Delta}(t)\geq 0$
and $f^{\Delta}(t)\leq 0$ for all $t \in [a,b]_\T^\kappa$,
respectively.
\end{theorem}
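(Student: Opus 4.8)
The plan is to reduce the four assertions to a single one and to prove that one directly from the definition of the delta derivative together with a compactness argument. Since $(-f)^\Delta=-f^\Delta$, replacing $f$ by $-f$ turns the decreasing (resp.\ non-increasing) statement into the increasing (resp.\ non-decreasing) one, so it suffices to treat the hypotheses $f^\Delta>0$ and $f^\Delta\ge 0$. Moreover the non-strict case can be deduced from the strict one by a perturbation: if $f^\Delta\ge 0$, set $f_\eps:=f+\eps\,\id$ with $\eps>0$; since $\id^\Delta\equiv 1$ one has $f_\eps^\Delta=f^\Delta+\eps\ge\eps>0$, so once the strict case is known $f_\eps$ is increasing, giving $f(s)-f(t)<\eps(t-s)$ for all $s<t$, and letting $\eps\to 0^+$ yields $f(s)\le f(t)$. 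Hence the whole theorem rests on the single implication $f^\Delta>0\Rightarrow f$ increasing.

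The key local observation is that $f^\Delta(c)>0$ forces $f$ to increase immediately to the right of $c$. If $c$ is right-scattered this is immediate, because $f^\Delta(c)=\bigl(f(\sigma(c))-f(c)\bigr)/\mu(c)$ and hence $f(\sigma(c))>f(c)$; if $c$ is right-dense it follows from the fact that the difference quotient $\bigl(f(u)-f(c)\bigr)/(u-c)$ tends to $f^\Delta(c)>0$, so $f(u)>f(c)$ for $u>c$ close enough to $c$. To assemble these into a global statement, I would fix $s<t$ in $[a,b]_\T$ and argue by contradiction: assuming the set $Z=\{r\in(s,t]_\T:f(r)\le f(s)\}$ is nonempty, continuity of $f$ and closedness of $\T$ give that $c:=\inf Z$ belongs to $Z$, while the local right-increase at $s$ forces $c>s$ and $f(r)>f(s)\ge f(c)$ for every $r\in(s,c)_\T$. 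Thus $f$ strictly decreases on arriving at $c$ from the left, which I would turn into a contradiction by inspecting the sign of the derivative there: if $c$ is left-scattered then $f^\Delta(\rho(c))=\bigl(f(c)-f(\rho(c))\bigr)/(c-\rho(c))\le 0$, and if $c$ is left-dense then the corresponding difference quotients are all negative, forcing $f^\Delta(c)\le 0$; either way we contradict $f^\Delta>0$. Hence $Z=\emptyset$, i.e.\ $f(r)>f(s)$ for all $r\in(s,t]_\T$, which is exactly strict monotonicity.

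I expect the delicate point to be the dense points rather than the scattered ones. At a scattered point the sign of $f^\Delta$ controls an actual increment of $f$ and the comparison is purely algebraic; at a dense point $f^\Delta$ only records the limit of difference quotients, so the argument must be genuinely analytic and relies on continuity of $f$ on the compact set $[a,b]_\T$ both to locate the critical point $c=\inf Z$ and to guarantee $c\in Z$. An alternative that packages exactly these case distinctions once and for all would be to first establish a Mean Value inequality on time scales---there exist $\xi,\tau\in[a,b]_\T^\kappa$ with $f^\Delta(\xi)(b-a)\le f(b)-f(a)\le f^\Delta(\tau)(b-a)$---and then read off all four monotonicity statements by applying it on every subinterval $[s,t]_\T$; the burden then shifts to proving that inequality, whose proof again hinges on the behaviour of $f^\Delta$ at the extremal points of $f$, dense or scattered.
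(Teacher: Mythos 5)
First, a point of reference: the paper does not prove this statement at all --- it is imported verbatim as Corollary~2.9 of \cite{Guseinov:2002} and used as a black box --- so your attempt can only be judged on its own terms, not against an in-paper argument.

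Your reductions are sound: passing to $-f$ handles the decreasing cases, perturbing by $\epsilon\,\mathrm{id}$ (whose delta derivative is $1$) handles the non-strict cases, the local right-increase at a point with positive delta derivative is correct in both the right-scattered and right-dense situations, and the localization $c=\inf Z\in Z$ with $c>s$ is justified by closedness of $\T$ and continuity of $f$. The genuine gap is in the final contradiction when $c$ is \emph{left-dense but right-scattered}. There $f^\Delta(c)=\bigl(f(\sigma(c))-f(c)\bigr)/\mu(c)$ is governed entirely by the forward jump and is insensitive to how $f$ arrives at $c$ from the left: the quotients that actually converge to $f^\Delta(c)$ are $\bigl(f(\sigma(c))-f(r)\bigr)/(\sigma(c)-r)$, and in your situation these tend to $f^\Delta(c)>0$, while the ordinary quotients $\bigl(f(c)-f(r)\bigr)/(c-r)$, which are indeed negative, do \emph{not} converge to $f^\Delta(c)$ because the error bound in the definition of the delta derivative, divided by $c-r$, blows up like $\mu(c)/(c-r)$. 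Concretely, on $\T=[-1,0]\cup\{1\}$ the function $f(x)=x^2$ for $x\in[-1,0]$, $f(1)=5$, decreases strictly into the left-dense, right-scattered point $0$ yet has $f^\Delta(0)=5>0$; so inspecting the sign of $f^\Delta(c)$ cannot yield your contradiction (your ``left-dense'' case tacitly assumes $c$ is also right-dense). In this configuration continuity forces $f(c)=f(s)$ while $f(r)>f(s)$ for all $r\in(s,c)_\T$, and the contradiction must be found strictly inside $(s,c)_\T$: take a maximizer $m$ of $f$ on the compact set $[s,c]_\T$; since $c$ is left-dense, $(s,c)_\T\neq\emptyset$ and $m\in(s,c)_\T$, and then $f^\Delta(m)\le 0$ whether $m$ is right-scattered (as $\sigma(m)\le c$ and $f(\sigma(m))\le f(m)$) or right-dense (the forward quotients are $\le 0$). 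With that repair --- or by organizing the entire proof around this interior-maximum principle, which also subsumes your left-scattered case --- the argument closes. The mean-value-inequality alternative you sketch is not a second proof, since it defers exactly the same case analysis.
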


\begin{definition}
A function $F:\mathbb{T}\rightarrow\mathbb{R}$ is called a
\emph{delta antiderivative} of
$f:\mathbb{T}\rightarrow\mathbb{R}$ provided
$$
F^{\Delta}(t)=f(t), \qquad  \forall t \in \mathbb{T}^\kappa.
$$
In this case we define the \emph{delta integral} of $f$ from $a$
to $b$ ($a,b \in \mathbb{T}$) by
\begin{equation*}
\int_{a}^{b}f(t)\Delta t:=F(b)-F(a) \, .
\end{equation*}
\end{definition}

\begin{ex}
If $\mathbb{T}=\mathbb{R}$, then \[\int\limits_{a}^{b}  f(t) \Delta
t=\int\limits_{a}^{b}  f(t) d t,\] where the integral on the
right hand side is the usual Riemann integral.
If $\mathbb{T}=h\mathbb{Z}$, where $h>0$, then
\[\int\limits_{a}^{b}  f(t) \Delta
t=\sum\limits_{k=\frac{a}{h}}^{\frac{b}{h}-1}  h\cdot f(kh),\] for $a<b$.
\end{ex}

In order to present a class of functions that possess a delta
antiderivative, the following definition is introduced:

\begin{definition}
A function $f:\mathbb{T} \to \mathbb{R}$ is called {\emph{rd-continuous}}
if it is continuous at the right-dense points in $\mathbb{T}$
and its left-sided limits exist (finite)
at all left-dense points in $\mathbb{T}$.
A function $f:\mathbb{T} \to \mathbb{R}^n$ is
{\emph{rd-continuous}} if all its components
are rd-continuous.
\end{definition}

We remark that a rd-continuous function defined on a compact interval,
with real values, is bounded. The set of all rd-continuous functions
$f:\mathbb{T} \to \mathbb{R}^n$ is denoted
by $\mathrm{C}_{rd}(\mathbb{T}, \mathbb{R}^n)$,
or simply by $\mathrm{C}_{rd}$. Similarly,
$\mathrm{C}^1_{rd}(\mathbb{T}, \mathbb{R}^n)$ and
$\mathrm{C}^1_{rd}$
will denote the set of functions from $\mathrm{C}_{rd}$
whose delta derivative belongs to $\mathrm{C}_{rd}$.

\begin{theorem}[Theorem~1.74 of \cite{bookBohner}]
\label{antiderivada}
Every rd-continuous function has a delta
antiderivative. In particular, if $a \in \mathbb{T}$,
then the function $F$ defined by
$$
F(t)= \int_{a}^{t}f(\tau)\Delta\tau, \quad t \in \mathbb{T} \, ,
$$
is a delta antiderivative of $f$.
\end{theorem}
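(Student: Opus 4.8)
The plan is to fix a point $a \in \mathbb{T}$, take $F(t) := \int_a^t f(\tau)\,\Delta\tau$ as the candidate, and verify $F^\Delta(t) = f(t)$ for every $t \in \mathbb{T}^\kappa$. Since the delta integral is \emph{defined} through antiderivatives, the genuine content is the \emph{existence} of a function $F$ with $F^\Delta = f$; once such an $F$ is produced, the formula for the indefinite integral follows by normalizing $F(a)=0$. The first step is to record the structural dichotomy that governs every estimate: because $\mathbb{T}$ is closed in $\mathbb{R}$, each $t \in \mathbb{T}^\kappa$ is either right-scattered, where $F^\Delta(t) = \bigl(F(\sigma(t)) - F(t)\bigr)/\mu(t)$, or right-dense, where $F^\Delta(t)$ must coincide with the ordinary one-sided derivative of $F$. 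Correspondingly, on $[a,\sup\mathbb{T}]$ the complement $\mathbb{R}\setminus\mathbb{T}$ is a countable disjoint union of gaps $(s_i,\sigma(s_i))$ with right-scattered left endpoints $s_i$, while the points outside the gaps are right-dense.

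I would construct $F$ by prescribing its increments to match the two forms of the derivative: across each gap set $F(\sigma(s_i)) - F(s_i) := \mu(s_i)\,f(s_i)$, and on any stretch of right-dense points let $F$ be an ordinary Riemann integral of $f$, which is legitimate because rd-continuity forces $f$ to be continuous at right-dense points. The delicate task is to glue these prescriptions into a single \emph{continuous} function that is consistent at left-dense points arising as left limits of right-scattered points, and this gluing is the main obstacle. It is exactly where rd-continuity is indispensable: the existence of finite left-hand limits of $f$ at left-dense points is what makes the accumulated jumps converge and keeps $F$ continuous there. Rigorously I expect to run the construction through the induction principle on time scales (in the spirit of \cite{bookBohner}), first producing a pre-antiderivative of the regulated function $f$ which satisfies $F^\Delta = f$ off a countable exceptional set.

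The verification I would then carry out casewise. At right-scattered points, $F^\Delta(t) = f(t)$ is immediate from the prescribed increment. At a right-dense point $t$ I would fix $\varepsilon > 0$, use continuity of $f$ at $t$ (part of rd-continuity) to find a neighborhood on which $|f(s) - f(t)| \le \varepsilon$, and apply the mean value inequality on time scales — itself a consequence of Theorem~\ref{monotonia delta} — to the function $G(\tau) := F(\tau) - f(t)\,\tau$. Since $G^\Delta(\tau) = f(\tau) - f(t)$ off the exceptional set and hence $|G^\Delta| \le \varepsilon$ near $t$, the inequality yields $\bigl|(F(\sigma(t)) - F(s)) - f(t)(\sigma(t) - s)\bigr| \le \varepsilon\,|\sigma(t) - s|$, which is precisely the definition of $F^\Delta(t) = f(t)$. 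The subtle point here is that controlling the countable exceptional set requires the mean value inequality for merely pre-differentiable functions rather than the plain monotonicity statement. Finally, uniqueness up to an additive constant follows again from Theorem~\ref{monotonia delta}: a function with everywhere-zero delta derivative is both non-increasing and non-decreasing, hence constant, so the constructed antiderivative agrees with $\int_a^t f(\tau)\,\Delta\tau$ after normalizing at $t=a$.
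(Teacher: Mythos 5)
The paper does not prove this statement: it is imported verbatim as Theorem~1.74 of \cite{bookBohner}, so there is no in-paper proof to compare against. Judged on its own terms, your sketch reproduces the standard argument from that reference: obtain a pre-antiderivative of the regulated function $f$ via the induction principle on time scales (so that $F^\Delta = f$ off a countable exceptional set of right-dense points), dispose of right-scattered points by the quotient formula $F^\Delta(t) = \bigl(F(\sigma(t))-F(t)\bigr)/\mu(t)$, and at right-dense points apply the mean value inequality for pre-differentiable functions to $G(\tau) = F(\tau) - f(t)\tau$ to recover the defining $\varepsilon$-inequality; uniqueness up to a constant then follows from Theorem~\ref{monotonia delta}. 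That is correct and complete as a plan. One caveat: your explicit picture of the construction --- increments $\mu(s_i)f(s_i)$ across gaps plus ordinary Riemann integrals ``on stretches of right-dense points'' --- does not describe a general time scale (for a Cantor-type $\mathbb{T}$ the right-dense points form no intervals), so that paragraph cannot stand on its own; the argument only becomes rigorous once you genuinely route the existence step through the induction principle, as you indicate. Since that is exactly what the cited proof does, the proposal is sound, but the gap-plus-interval decomposition should be presented as motivation rather than as the construction itself.
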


The following results will be very useful
in the proof of our main result
(Theorem~\ref{secondEL}).

\begin{theorem}[Theorems~1.93, 1.97, and 1.98 of \cite{bookBohner}]
\label{theorems}
Assume that $\nu:\mathbb{T}\rightarrow\R$ is strictly increasing and
$\widetilde{\mathbb{T}}:=\nu(\mathbb{T})$ is a time scale.
\begin{enumerate}
\item  (Chain rule) Let $\omega: \widetilde{\mathbb{T}}
\rightarrow\R$. If $\nu^\Delta(t)$ and
$\omega^{\widetilde{\Delta}}(\nu (t))$ exist for all
$t \in \mathbb{T}^\kappa$, then
$$(\omega\circ\nu)^\Delta=(\omega^{\widetilde{\Delta}}
\circ\nu)\nu^\Delta.$$
\item  (Derivative of the inverse) The relation
$$ (\nu^{-1})^{\widetilde{\Delta}}(\nu(t))= \frac{1}{\nu^\Delta(t)}$$
holds at points $t \in  \mathbb{T}^\kappa$ where $\nu^\Delta(t) \neq 0.$
\item (Substitution in the integral)  If
$f:\widetilde{\mathbb{T}}\rightarrow\R$ is a $\mathrm{C}_{rd}$ function and
$\nu$ is a $\mathrm{C}^1_{rd}$ function, then for $a,b \in
\mathbb{T}$,
$$
\int_{a}^{b} f(\nu(t))\nu^\Delta(t)\Delta t =
\int_{\nu(a)}^{\nu(b)} f(s)\widetilde{{\Delta}}s.
$$
\end{enumerate}
\end{theorem}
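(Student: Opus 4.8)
The three assertions share the standing hypothesis that $\nu$ is a strictly increasing bijection of $\T$ onto the time scale $\widetilde{\T}$, so the plan is to establish the chain rule (1) first and then deduce (2) and (3) from it. The crucial preliminary observation is that such a $\nu$ intertwines the two forward jump operators, namely $\nu(\sigma(t))=\widetilde\sigma(\nu(t))$ for every $t\in\T^\kappa$, where $\widetilde\sigma$ is the forward jump operator of $\widetilde{\T}$. This holds because a strictly increasing bijection preserves order, hence sends the least element of $\T$ exceeding $t$ to the least element of $\widetilde{\T}$ exceeding $\nu(t)$; for the same reason $\nu$ carries $\T^\kappa$ onto $\widetilde{\T}^\kappa$. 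I would also record that a delta differentiable function is continuous, so $\nu$ is continuous and $\nu^{-1}$ is continuous as the inverse of a continuous strictly increasing bijection.

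To prove (1) I would fix $t\in\T^\kappa$, write $g:=\omega\circ\nu$ and $\xi:=\nu(t)\in\widetilde{\T}^\kappa$, and argue directly from the definition of the delta derivative. Using the jump identity, for $s\in\T$ and $r:=\nu(s)$ one has $g(\sigma(t))-g(s)=\omega(\widetilde\sigma(\xi))-\omega(r)$ and $\nu(\sigma(t))-\nu(s)=\widetilde\sigma(\xi)-r$, so the composite difference lines up exactly with the expression defining $\omega^{\widetilde\Delta}(\xi)$. Given $\eps>0$, the existence of $\omega^{\widetilde\Delta}(\xi)$ and of $\nu^\Delta(t)$ furnishes, after intersecting a neighborhood of $t$ with the $\nu$-preimage of a neighborhood of $\xi$ (continuity of $\nu$), a single neighborhood $U$ on which both defining estimates hold. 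Substituting $\nu(\sigma(t))-\nu(s)=\nu^\Delta(t)(\sigma(t)-s)+R$ with $|R|\le\eps'|\sigma(t)-s|$ into the estimate for $\omega$ and collecting errors yields
\[
\bigl|g(\sigma(t))-g(s)-\omega^{\widetilde\Delta}(\xi)\,\nu^\Delta(t)\,(\sigma(t)-s)\bigr|\le C\eps'\,|\sigma(t)-s|,
\]
with $C$ depending only on $|\omega^{\widetilde\Delta}(\xi)|$ and $|\nu^\Delta(t)|$; choosing the auxiliary tolerances small relative to $\eps$ shows that $g^\Delta(t)=\omega^{\widetilde\Delta}(\nu(t))\,\nu^\Delta(t)$.

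Assertion (2) then follows by applying the chain rule to $\nu^{-1}\circ\nu=\id$ on $\T$: since $\id^\Delta\equiv 1$ one obtains $1=(\nu^{-1})^{\widetilde\Delta}(\nu(t))\,\nu^\Delta(t)$, whence the stated formula wherever $\nu^\Delta(t)\neq0$, the existence of $(\nu^{-1})^{\widetilde\Delta}$ at such points being clear since the difference quotient of $\nu^{-1}$ at $\nu(t)$ is exactly the reciprocal of that of $\nu$ at $t$. For (3) I would set $F(s):=\int_{\nu(a)}^{s}f(r)\,\widetilde\Delta r$ on $\widetilde{\T}$, so that $F^{\widetilde\Delta}=f$ on $\widetilde{\T}^\kappa$ by Theorem~\ref{antiderivada}. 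The chain rule applied to $G:=F\circ\nu$ gives $G^\Delta(t)=f(\nu(t))\,\nu^\Delta(t)$. Since $\nu\in\mathrm{C}^1_{rd}$ and $f\in\mathrm{C}_{rd}$, the integrand $(f\circ\nu)\,\nu^\Delta$ is rd-continuous, so $G$ is a genuine delta antiderivative of it and the definition of the delta integral yields
\[
\int_a^b f(\nu(t))\,\nu^\Delta(t)\,\Delta t=G(b)-G(a)=F(\nu(b))-F(\nu(a))=\int_{\nu(a)}^{\nu(b)}f(s)\,\widetilde\Delta s,
\]
using $F(\nu(a))=0$.

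The hard part will be the chain rule, and within it the $\eps$-bookkeeping that merges the two one-variable estimates onto a common neighborhood; the jump-intertwining identity $\nu\circ\sigma=\widetilde\sigma\circ\nu$ is precisely what makes the composite difference $g(\sigma(t))-g(s)$ match the defining expression for $\omega^{\widetilde\Delta}$. A secondary technical point, needed in (3), is checking that $f\circ\nu$ is rd-continuous, which again relies on the strictly increasing continuous $\nu$ respecting right-density and the existence of left-sided limits.
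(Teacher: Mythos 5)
The paper itself gives no proof of this statement: it is quoted as background (Theorems~1.93, 1.97, and 1.98 of \cite{bookBohner}) and used only as a tool in the proof of Theorem~\ref{secondEL}, so the natural benchmark is the proof in that reference. Your argument is correct and is essentially that standard one---the jump-intertwining $\nu\circ\sigma=\widetilde{\sigma}\circ\nu$ (where at right-dense points it is the continuity of $\nu$, equivalently the closedness of $\nu(\mathbb{T})$, and not order-preservation alone that does the work, a point your recorded continuity of $\nu$ does cover), the $\varepsilon$-bookkeeping for the chain rule, and the derivation of the inverse-derivative and substitution formulas from the chain rule via $\nu^{-1}\circ\nu=\mathrm{id}$ and via the delta antiderivative $F\circ\nu$ together with Theorem~\ref{antiderivada}.
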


\begin{definition}
We say that $y_{\ast}\in C_{rd}^{1}([a,b]_\T, \mathbb{R}^n)$
is a local minimizer for problem (\ref{problem})
if there exists $\delta > 0$ such that
$$
I[y_{\ast}]\leq I[y]
$$
for all $y \in C_{rd}^{1}([a,b]_\T, \mathbb{R}^n)$
satisfying the boundary conditions
$q(a)=q_a$, $q(b)=q_b$, and
$$
\parallel y - y_{\ast}\parallel :=
\sup_{t \in [a,b]_\T^\kappa}\mid y^{\sigma}(t)-y_{\ast}^{\sigma}(t)\mid
+ \sup_{t \in [a,b]_\T^\kappa}\mid y^{\Delta}(t)-y_{\ast}^{\Delta}(t)\mid < \delta \, ,
$$
where $|\cdot|$ denotes a norm in $\mathbb{R}^n$.
\end{definition}

We recall now the (first) Euler-Lagrange equation
as presented in \cite{BohnerCV04}. As in the introduction,
we use $\partial_i L$
to denote the partial derivative of $L$ with respect
to the $i$-th variable (or group of variables).

\begin{theorem}[Theorem~4.2 of \cite{BohnerCV04}]
\label{Thm:ELts}
If $q$ is a local minimizer of \eqref{problem},
then $q$ satisfies the following Euler-Lagrange equation:
\begin{equation}
\label{eq:el} \frac{\Delta}{\Delta t}\partial_{3}
L\left(t,q^\sigma(t),{q}^\Delta(t)\right) =
\partial_{2} L\left(t,q^\sigma(t),{q}^\Delta(t)\right)
\ , \ t \in [a,b]_\T^{\kappa}.
\end{equation}
\end{theorem}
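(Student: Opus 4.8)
The statement is the classical first--order necessary condition transcribed into the delta--calculus, so the plan is the standard variational argument. First I would fix an admissible variation: take any $h\in C^1_{rd}([a,b]_\T,\R^n)$ with $h(a)=h(b)=0$, and observe that for each $\eps$ the curve $q+\eps h$ again lies in $\D$, with $\|(q+\eps h)-q\|=|\eps|\,\|h\|$, so it sits inside the $\delta$--neighbourhood from the definition of local minimizer once $|\eps|$ is small. Hence the real function $\phi(\eps):=I[q+\eps h]$ attains a local minimum at $\eps=0$, and therefore $\phi'(0)=0$.

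Next I would compute $\phi'(0)$ by differentiating under the delta--integral sign. Since $L$ is $C^1$ and the rd-continuous functions $q^\sigma,q^\Delta,h^\sigma,h^\Delta$ are bounded on the compact interval $[a,b]_\T$, the difference quotients of the integrand converge uniformly, which legitimises interchanging $\frac{d}{d\eps}$ with $\int_a^b\cdots\Delta t$. This produces the first variation
\[
\phi'(0)=\int_a^b\Bigl[\partial_2 L(t,q^\sigma,q^\Delta)\,h^\sigma(t)+\partial_3 L(t,q^\sigma,q^\Delta)\,h^\Delta(t)\Bigr]\Delta t=0 .
\]
To pass from this integral identity, valid for every admissible $h$, to the pointwise equation I would first eliminate the $h^\sigma$ term by a delta integration by parts. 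Set $A(t):=\int_a^t\partial_2 L(\tau,q^\sigma(\tau),q^\Delta(\tau))\,\Delta\tau$; by Theorem~\ref{antiderivada} this is well defined and satisfies $A^\Delta=\partial_2 L(\cdot,q^\sigma,q^\Delta)$, the integrand being rd-continuous. Using the delta product rule and $h(a)=h(b)=0$ one gets $\int_a^b A^\Delta h^\sigma\,\Delta t=-\int_a^b A\,h^\Delta\,\Delta t$, so the first variation collapses to
\[
\int_a^b\Bigl[\partial_3 L(t,q^\sigma,q^\Delta)-A(t)\Bigr]h^\Delta(t)\,\Delta t=0 .
\]

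The crux, and the step I expect to be the main obstacle, is the time--scale fundamental lemma of the calculus of variations (the du Bois--Reymond lemma): if $\int_a^b g(t)\,h^\Delta(t)\,\Delta t=0$ for all $h\in C^1_{rd}$ vanishing at the endpoints, then $g$ is constant on $[a,b]_\T^\kappa$. I would prove it by the standard device of choosing the specific competitor $h(t)=\int_a^t\bigl(g(\tau)-c\bigr)\Delta\tau$ with $c:=\frac{1}{b-a}\int_a^b g(\tau)\,\Delta\tau$ selected precisely so that $h(b)=h(a)=0$; then $h^\Delta=g-c$, the hypothesis forces $\int_a^b\bigl(g(t)-c\bigr)^2\Delta t=0$, and hence $g\equiv c$, because a nonnegative rd-continuous function with vanishing delta--integral must vanish (its antiderivative is nondecreasing and has equal endpoint values, by the monotonicity criterion of Theorem~\ref{monotonia delta}). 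The genuinely time--scale--specific care lies exactly here: keeping track of $g$ versus $g^\sigma$ in the delta integration by parts, and checking that the constructed $h$ is really $C^1_{rd}$. Applying the lemma with $g=\partial_3 L(\cdot,q^\sigma,q^\Delta)-A$ yields $\partial_3 L(t,q^\sigma,q^\Delta)=c+\int_a^t\partial_2 L\,\Delta\tau$, and delta--differentiating both sides gives $\frac{\Delta}{\Delta t}\partial_3 L(t,q^\sigma,q^\Delta)=\partial_2 L(t,q^\sigma,q^\Delta)$, which is precisely \eqref{eq:el}.
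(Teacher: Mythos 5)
Your proof is correct, but note that the paper does not prove this statement at all: it is imported verbatim as Theorem~4.2 of \cite{BohnerCV04}, and your argument --- first variation of $\phi(\eps)=I[q+\eps h]$, delta integration by parts via the product rule $(Ah)^\Delta=A^\Delta h^\sigma+Ah^\Delta$, and the time-scale du~Bois--Reymond lemma proved with the competitor $h(t)=\int_a^t(g(\tau)-c)\Delta\tau$ --- is essentially the proof given in that reference (its Lemma~4.1 and Theorem~4.2). The only cosmetic points are that for $n>1$ the du~Bois--Reymond step should be applied componentwise, and that concluding $\int_a^b(g-c)^2\,\Delta t=0$ uses both the hypothesis and the identity $c\int_a^b(g-c)\,\Delta t=0$ coming from the choice of $c$; neither affects correctness.
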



\section{MAIN RESULTS}
\label{sec:MR}

The following theorem presents a generalization
to time scales of the second Euler-Lagrange
equation \cite{book:Troutman} (also known as the
DuBois-Reymond equation \cite{Cesari}).

\begin{theorem}\emph{(the second Euler-Lagrange
equation on time scales)}:
\label{secondEL}
If $q\in \mathcal{D}$ is a local minimizer of problem \eqref{problem},
then $q$ satisfies the equation
\begin{equation}
\label{2equationEL}
\frac{\Delta}{\Delta t} \mathcal{H}(t,q^\sigma(t),q^\Delta(t))
=-\partial_1 L(t,q^\sigma(t),q^\Delta(t))
\end{equation}
for all $t\in [a,b]_\T^\kappa$, where
$$
\mathcal{H}(t,u,v)=-L(t,u,v) + \partial_3 L(t,u,v)v
+ \partial_1 L(t,u,v) \mu(t) \, ,
$$
$t\in\T$ and $u,v\in \mathbb{R}^n$.
\end{theorem}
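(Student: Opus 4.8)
The plan is to deduce \eqref{2equationEL} from the first Euler--Lagrange equation \eqref{eq:el} of Theorem~\ref{Thm:ELts} by delta-differentiating the time-scale Hamiltonian $\mathcal{H}$ directly along the minimizer $q$. To keep the computation readable I would abbreviate, along $q$, $A(t)=\partial_1 L(t,q^\sigma(t),q^\Delta(t))$, $B(t)=\partial_2 L(t,q^\sigma(t),q^\Delta(t))$ and $C(t)=\partial_3 L(t,q^\sigma(t),q^\Delta(t))$, so that $\mathcal{H}(t,q^\sigma,q^\Delta)=-L(t,q^\sigma,q^\Delta)+C\cdot q^\Delta+\mu A$, while Theorem~\ref{Thm:ELts} reads $C^\Delta=B$. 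The goal is then simply to verify $\mathcal{H}^\Delta=-A$.

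First I would differentiate the three summands of $\mathcal{H}$ separately. For the middle term the product rule gives $(C\cdot q^\Delta)^\Delta=C^\Delta\cdot(q^\Delta)^\sigma+C\cdot(q^\Delta)^\Delta=B\cdot(q^\Delta)^\sigma+C\cdot(q^\Delta)^\Delta$, where \eqref{eq:el} was used. For the last term, $(\mu A)^\Delta=\mu^\Delta A^\sigma+\mu A^\Delta$, and I would record $\mu^\Delta=\sigma^\Delta-1$ since $\mu=\sigma-\mathrm{id}$. The essential and most delicate step is the delta derivative of the composite $t\mapsto L(t,q^\sigma(t),q^\Delta(t))$: this is exactly where the chain rule and substitution machinery of Theorem~\ref{theorems}, together with $f^\sigma=f+\mu f^\Delta$, are expected to be needed, and where the naive classical chain rule must be replaced by its correct time-scale form. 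I expect the second-order contribution $C\cdot(q^\Delta)^\Delta$ produced in $L^\Delta$ to cancel precisely the term $C\cdot(q^\Delta)^\Delta$ coming from $(C\cdot q^\Delta)^\Delta$, so that no second delta derivative of $q$ survives in the final identity; the remaining first-order terms should then combine, using $C^\Delta=B$ and the identity $(q^\Delta)^\sigma-(q^\sigma)^\Delta=-\mu^\Delta(q^\Delta)^\sigma$, with the contributions of $(\mu A)^\Delta$.

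The heart of the matter is the bookkeeping of the graininess-dependent terms. Had one (incorrectly) used the classical chain rule for $L^\Delta$, a residue proportional to $\mu$ and $\mu^\Delta$ would remain; the correct time-scale chain rule contributes additional graininess terms, and the purpose of the extra summand $\mu A=\partial_1 L\,\mu$ in $\mathcal{H}$—which has no classical counterpart—is precisely to produce, through $(\mu A)^\Delta$, the terms that combine with this residue to cancel it, leaving exactly $-\partial_1 L$. When $\mathbb{T}=\mathbb{R}$ one has $\mu\equiv 0$ and $\mu^\Delta\equiv 0$, so all of these terms vanish and the computation collapses to the classical derivation of \eqref{sec:class:EL}; this limiting check is how I would guard against sign or bookkeeping errors.

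The main obstacle I anticipate is making the composite chain rule rigorous for a merely $\mathrm{C}^1_{rd}$ minimizer, for which a second delta derivative $(q^\Delta)^\Delta$ need not exist, even though the formal computation above introduces such terms before they cancel. I would circumvent this by computing $L^\Delta$ and $\mathcal{H}^\Delta$ from the \emph{definition} of the delta derivative rather than from a formal chain rule, splitting $[a,b]_\T^\kappa$ into its right-dense points, where the ordinary chain rule applies and $\mu=0$ trivializes the extra term, and its right-scattered points, where $\mathcal{H}^\Delta$ is the difference quotient $(\mathcal{H}^\sigma-\mathcal{H})/\mu$ and $q^\Delta=(q^\sigma-q)/\mu$ holds automatically. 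In each case I would verify, invoking \eqref{eq:el} at the point in question, that only first-order data of $q$ enters the final relation $\mathcal{H}^\Delta=-\partial_1 L$, so that the conclusion is valid for every $q\in\mathcal{D}$.
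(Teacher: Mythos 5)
Your strategy cannot work, because the statement you are trying to prove is strictly stronger than the first Euler--Lagrange equation: on a general time scale, \eqref{2equationEL} is \emph{not} a formal consequence of \eqref{eq:el}, so no amount of delta-differentiation of $\mathcal{H}$ combined with the substitution $C^\Delta=B$ can close the argument. The paper's own second example in Section~\ref{sec:MR} (the time scale $\mathbb{T}=\{0,\tfrac18,\dots,1\}$ with $L=((q^\Delta)^2-1)^2$) exhibits a function $\tilde q$ satisfying \eqref{eq:EL:nwar}, i.e.\ the first Euler--Lagrange equation, but violating \eqref{eq:2ndEL:nwar}, i.e.\ the second; if your computation went through, it would prove that every extremal of \eqref{eq:el} satisfies \eqref{2equationEL}, which is false. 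The step that fails is exactly the one you flag as ``delicate'': at a right-scattered point the delta derivative of $t\mapsto L(t,q^\sigma(t),q^\Delta(t))$ is the finite difference $\bigl(L(\sigma(t),q^{\sigma\sigma}(t),q^{\Delta}(\sigma(t)))-L(t,q^\sigma(t),q^\Delta(t))\bigr)/\mu(t)$, which by the mean-value form of the chain rule involves values of $\nabla L$ averaged along the segment joining the two argument points --- not the values $\partial_iL(t,q^\sigma(t),q^\Delta(t))$ that occur in \eqref{eq:el} and in $(C\cdot q^\Delta)^\Delta$. These averaged terms do not cancel against $B\cdot(q^\Delta)^\sigma+C\cdot(q^\Delta)^\Delta$, and they cannot be made to cancel, because the implication is false. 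Your check that everything collapses correctly when $\mathbb{T}=\mathbb{R}$ only confirms that the obstruction is invisible in the continuum limit. (There is also the secondary regularity issue you raise yourself: at right-dense points the ``ordinary chain rule'' still needs $q^\Delta$, and $\partial_iL$ along $q$, to be differentiable, which a $C^1_{rd}$ minimizer and a $C^1$ Lagrangian do not provide; splitting into dense and scattered points does not supply this.)

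What is missing is a genuinely variational input beyond \eqref{eq:el}: the second Euler--Lagrange equation encodes stationarity under variations of the \emph{independent} variable, which on a general time scale is information independent of stationarity under variations of $q$. The paper obtains it by reparametrizing time: it sets $S(t)=t-c^Tq_0(t)$ for a suitably small $c\neq 0$, shows $\widetilde{\mathbb{T}}=S([a,b]_\T)$ is a new time scale, rewrites $I$ as a functional $\tilde I[\eta_0]$ with transformed Lagrangian $\tilde L$ via the substitution rule of Theorem~\ref{theorems}, applies the first Euler--Lagrange equation \emph{in integral form} to the transformed problem, and translates back using the first Euler--Lagrange equation for the original problem. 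This yields $\mathcal{H}(t,q_0^\sigma(t),q_0^\Delta(t))=-\int_a^t\partial_1L\,\Delta s+\mathrm{const}$, which simultaneously proves that $\mathcal{H}$ is delta differentiable along $q_0$ (it is an antiderivative of an rd-continuous function --- a fact your approach would have to assume) and gives \eqref{2equationEL} upon differentiation. If you want to salvage a more direct argument, you must build variations of $t$ into it in some form; differentiating $\mathcal{H}$ along a fixed extremal will not do.
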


\begin{proof}
Let $q_0\in \mathcal{D}$ be a local minimizer of functional
$I$ in \eqref{problem}.
We will prove that there exists
$c\in\mathbb{R}^n$, $c\neq 0$, that satisfies the condition
\begin{equation} \label{c}
1-c^T q_0^\Delta(t)>0, \quad \forall  t\in[a,b]_\T^\kappa.
\end{equation}
If $q_0^\Delta=0$, then any $c\in\mathbb{R}^n$ satisfies condition \eqref{c}.
Suppose now that $q_0^\Delta\neq 0$. Then there exists some $i=1,2,\ldots, n$
such that $q_{0,i}^\Delta \neq 0$ where we suppose that
$q_0=(q_{0,1}, q_{0,2}, \ldots, q_{0,n})$.
Since $q_{0,i}^\Delta$ is bounded on $[a,b]_\T^\kappa$,
then there exist $m, M \in \mathbb{R}$ such that
$$m \leq q_{0,i}^\Delta (t) \leq M, \quad \forall  t\in[a,b]_\T^\kappa.$$
Let $c:=(c_1, c_2, \ldots, c_n)$ where $c_j=0$ if $j\neq i$.
If $M>0$ we can choose $c_i$ such that $0<c_i<\frac{1}{M}$.
If $M\leq 0$ we can choose $c_i$ such that $\frac{1}{m}<c_i<0$.

The map $S:[a,b]_\T \rightarrow \mathbb{R}$ defined by $$S(t)=t-c^Tq_0(t)$$
is delta differentiable with
$S^\Delta(t)= 1-c^Tq_0^\Delta(t)$ and, by Theorem~\ref{monotonia delta},
$S$ is strictly increasing on $[a,b]_\T$. Note that
$\tilde{\T}=S([a,b]_\T)$ is a new time scale
(because $S$ is continuous and $[a,b]_\T$ is closed).
By $\tilde{\sigma}$ we denote
the forward jump operator and by $\tilde{\Delta}$ we denote
the  delta derivative on $\tilde{\T}$. Let $\tau=S(t)$ and define
$\eta_0(\tau):=q_0(S^{-1}(\tau))$ for $\tau\in \tilde{\T}$.
Note that
\begin{equation*}
t=S^{-1}(\tau) = \tau+c^T\eta_0(\tau)\, ,
\end{equation*}
and
\begin{equation}
\label{eq2}
\eta_0(\tau)=q_0(\tau+c^T\eta_0(\tau)) \, .
\end{equation}
By the chain rule and from \eqref{eq2},
\begin{equation*}
\eta_0^{\tilde{\Delta}}(\tau)
= q_0^\Delta(\tau+c^T\eta_0(\tau))(1+c^T\eta_0^{\tilde{\Delta}}(\tau)),
\end{equation*}
which gives
\begin{equation}
\label{eq5}
q_0^\Delta(t)=\frac{\eta_0^{\tilde{\Delta}}(\tau)}{1
+c^T\eta_0^{\tilde{\Delta}}(\tau)}.
\end{equation}
By the derivative of the inverse applied to $S$ we can conclude that
\begin{equation}
\label{eq6}
\frac{1}{1+c^T\eta_0^{\tilde{\Delta}}(\tau)}=1-c^Tq_0^\Delta(t).
\end{equation}
Note that, since $\widetilde{\sigma}\circ S= S\circ \sigma$, then
\begin{equation}
\label{eq6a}
\begin{split}
\widetilde{\mu}(\tau) &= \widetilde{\sigma}(\tau)-\tau
=\widetilde{\sigma}(S(t))-S(t)\\
&=S^\sigma(t)-S(t)=\mu(t)S^\Delta(t)\\
&=\mu(t)(1-c^Tq_0^\Delta(t))
\end{split}
\end{equation}
and
\begin{equation}
\label{eq3}
\begin{split}
\eta_0(\tilde{\sigma}(\tau)) &= \eta_0(\tilde{\sigma}(S(t)))\\
&= \eta_0(S\circ {\sigma}(t)) = q_0(S^{-1}(S\circ \sigma)(t))\\
&= q_0(\sigma(t)).
\end{split}
\end{equation}

From (\ref{eq5}), (\ref{eq3}), and the substitution in the integral,
\begin{equation*}
\begin{split}
I[q_0] &= \int_a^b L(t,q_0^\sigma(t),q_0^\Delta(t)) \Delta t\\
&=\int_\alpha^\beta \tilde{L}(\tau,\eta_0^{\tilde{\sigma}}(\tau),
\eta_0^{\tilde{\Delta}}(\tau)) \tilde{\Delta} \tau =: \tilde{I}[\eta_0],
\end{split}
\end{equation*}
where
\begin{equation}
\label{eq8}
\tilde{L}(\tau,\nu,\zeta)
=L\left(\tau+c^T\nu-c^T\tilde{\mu}(\tau)\zeta,\nu,\frac{\zeta}{1+c^T\zeta}\right)
\left(1+c^T\zeta\right),
\end{equation}
for $\tau\in\tilde{\mathbb{T}}$, $\nu,\zeta\in\mathbb{R}^n$,
$1+c^T\zeta>0$, $\alpha=S(a)$ and $\beta=S(b)$.
Let
\begin{equation*}
\begin{split}
\mathcal{E}=\{ \eta\ | &\ \eta : \tilde{\T} \rightarrow \R^n,
\ \eta\in \mathrm{C}^1_{rd},\\
&\ \eta(\alpha)=\eta_0(\alpha),\
\eta(\beta)=\eta_0(\beta),\\
&\  1+c^T\eta^{\tilde{\Delta}}(\tau)>0
\text{ for } \tau\in \tilde{\T} \}.
\end{split}
\end{equation*}
We remark that $c$ was chosen so small that
the constraint $1+c^T\eta^{\tilde{\Delta}}(\tau)>0$
is always satisfied for any function $\eta$
in the ``nearby'' of $\eta_0$.
Since $q_0$ is by assumption a local minimizer of $I$ in $\mathcal{D}$,
it follows that $\eta_0$ is a local minimizer of $\tilde{I}$ in $\mathcal{E}$,
so it satisfies the Euler-Lagrange equation (in integral form)
\begin{equation}
\label{eq9}
\partial_3 \tilde{L}(\tau,\eta_0^{\tilde{\sigma}}(\tau),
\eta_0^{\tilde{\Delta}}(\tau))
= \int_\alpha^\tau \partial_2 \tilde{L}(s,\eta_0^{\tilde{\sigma}}(s),
\eta_0^{\tilde{\Delta}}(s))\tilde{\Delta}s + C_1,
\end{equation}
where $C_1$ is a constant vector.
Differentiating \eqref{eq8} we obtain
\begin{equation*}\label{eq10}
\begin{split}
\partial_2 \tilde{L} &\left(\tau,\nu,\zeta\right)\\
=&\partial_1 L\left(\tau+c^T\nu-c^T\tilde{\mu}(\tau)\zeta,\nu,\frac{\zeta}{1+c^T\zeta}\right)
\left(1+c^T\zeta\right)c^T\\
&+\partial_2 L\left(\tau+c^T\nu-c^T\tilde{\mu}(\tau)\zeta,\nu,\frac{\zeta}{1+c^T\zeta}\right)
\left(1+c^T\zeta\right)
\end{split}
\end{equation*}
and
\begin{equation*}
\label{eq11}
\begin{split}
\partial_3 &\tilde{L}\left(\tau,\nu,\zeta\right)\\
&= L\left(\tau+c^T\nu-c^T\tilde{\mu}(\tau)\zeta,\nu,\frac{\zeta}{1+c^T\zeta}\right)c^T\\
&+ \partial_3 L\left(\tau+c^T\nu-c^T\tilde{\mu}(\tau)\zeta,\nu,\frac{\zeta}{1+c^T\zeta}\right)
\left(1+c^T\zeta\right)^{-1}\\
&-\partial_1 L\left(\tau+c^T\nu-c^T\tilde{\mu}(\tau)\zeta,\nu,\frac{\zeta}{1+c^T\zeta}\right)
c^T\tilde{\mu}(\tau)\left(1+c^T\zeta\right).
\end{split}
\end{equation*}
Using \eqref{eq5}, \eqref{eq6},
\eqref{eq6a} and (\ref{eq3}) we obtain
\begin{equation*}
\begin{split}
&\partial_3 \tilde{L}\left(\tau,\eta_0^{\tilde{\sigma}}(\tau),\eta_0^{\tilde{\Delta}}(\tau)\right)\\
&= L\left(\tau+c^T\eta_0^{\tilde{\sigma}}(\tau)
-c^T\eta_0^{\tilde{\Delta}}(\tau)\tilde{\mu}(\tau),\eta_0^{\tilde{\sigma}}(\tau),
\frac{\eta_0^{\tilde{\Delta}}(\tau)}{1+c^T\eta_0^{\tilde{\Delta}}(\tau)}\right)c^T\\
&+ \partial_3 L\left(\tau+c^T\eta_0^{\tilde{\sigma}}(\tau)
-c^T\eta_0^{\tilde{\Delta}}(\tau)\tilde{\mu}(\tau),\eta_0^{\tilde{\sigma}}(\tau),
\frac{\eta_0^{\tilde{\Delta}}(\tau)}{1+c^T\eta_0^{\tilde{\Delta}}(\tau)}\right)\\
&\cdot \left(1+c^T\eta_0^{\tilde{\Delta}}(\tau)\right)^{-1}
- c^T\tilde{\mu}(\tau) \left(1+c^T\eta_0^{\widetilde{\Delta}}(t)\right)\\
&\cdot \partial_1 L\left(\tau+c^T\eta_0^{\tilde{\sigma}}(\tau)
-c^T\eta_0^{\tilde{\Delta}}(\tau)\tilde{\mu}(\tau),\eta_0^{\tilde{\sigma}}(\tau),
\frac{\eta_0^{\tilde{\Delta}}(\tau)}{1+c^T\eta_0^{\tilde{\Delta}}(\tau)}\right)\\
&= c^TL\left(t,q_0^\sigma(t),q_0^\Delta(t)\right)
+ \partial_3 L\left(t,q_0^\sigma(t),q_0^\Delta(t)\right)\left(1-c^Tq_0^\Delta(t)\right)\\
&- c^T\mu(t)\left(1-c^Tq_0^\Delta(t)\right)
\left(1+c^T\eta_0^{\widetilde{\Delta}}(t)\right)
\partial_1 L\left(t,q_0^\sigma(t),q_0^\Delta(t)\right)\\
&= c^TL\left(t,q_0^\sigma(t),q_0^\Delta(t)\right)
+\partial_3 L\left(t,q_0^\sigma(t),q_0^\Delta(t)\right)
\left(1-c^Tq_0^\Delta(t)\right)\\
&- c^T\mu(t)\partial_1 L\left(t,q_0^\sigma(t),q_0^\Delta(t)\right).
\end{split}
\end{equation*}
Note that
\begin{equation*}
\begin{split}
\int_\alpha^\tau
&\partial_2 \tilde{L}\left(s,\eta_0^{\tilde{\sigma}}(s),\eta_0^{\tilde{\Delta}}(s)\right)\tilde{\Delta}s + C_1\\
&=\int_\alpha^\tau c^T \partial_1 L\left(S^{-1}(s),q_0^\sigma(S^{-1}(s)),q_0^\Delta(S^{-1}(s))\right)
(S^{-1})^{\widetilde{\Delta}}(s)\widetilde{\Delta} s\\
& \ \ + \int_\alpha^\tau \partial_2 L\left(S^{-1}(s),q_0^\sigma(S^{-1}(s)),q_0^\Delta(S^{-1}(s))\right)
(S^{-1})^{\widetilde{\Delta}}(s)\widetilde{\Delta} s + C_1\\
&= \int_a^t c^T \partial_1 L\left(s,q_0^\sigma(s),q_0^\Delta(s)\right) \Delta s\\
& \ \ + \int_a^t \partial_2 L\left(s,q_0^\sigma(s),q_0^\Delta(s)\right) \Delta s + C_1 \, .
\end{split}
\end{equation*}
Hence, by the Euler-Lagrange equation (\ref{eq9}),
we may conclude that
\begin{equation*}
\begin{split}
c^T & L\left(t,q_0^\sigma(t),q_0^\Delta(t)\right)
+ \partial_3 L\left(t,q_0^\sigma(t),q_0^\Delta(t)\right)
\left(1-c^Tq_0^\Delta(t)\right)\\
& \ \ - c^T\mu(t)\partial_1 L\left(t,q_0^\sigma(t),q_0^\Delta(t)\right)\\
&=\int_a^t c^T \partial_1 L\left(s,q_0^\sigma(s),q_0^\Delta(s)\right) \Delta s\\
& \ \ + \int_a^t \partial_2 L\left(s,q_0^\sigma(s),q_0^\Delta(s)\right) \Delta s + C_1 .
\end{split}
\end{equation*}
The last equality may be rewritten as
\begin{equation*}
\label{eq13}
\begin{split}
c^T \Bigl[ &
L\left(t,q_0^\sigma(t),q_0^\Delta(t)\right)
- \partial_3 L\left(t,q_0^\sigma(t),q_0^\Delta(t)\right) q_0^\Delta(t)\\
&- \int_a^t \partial_1 L\left(s,q_0^\sigma(s),q_0^\Delta(s)\right) \Delta s
-\partial_1 L\left(t,q_0^\sigma(t),q_0^\Delta(t)\right)\mu(t) \Bigr]\\
= & - \left[\partial_3 L\left(t,q_0^\sigma(t),q_0^\Delta(t)\right)
- \int_a^t  \partial_2 L\left(s,q_0^\sigma(s),q_0^\Delta(s)\right) \Delta s
- C_1\right].
\end{split}
\end{equation*}
Using the Euler-Lagrange equation for $q_0$
we arrive at the intended statement.
\end{proof}

If $\mathbb{T}=\mathbb{R}$, then the equation (\ref{2equationEL})
simplifies due to the fact that $\mu=0$,
and we obtain the classical second Euler-Lagrange equation
(\textrm{cf.}, \textrm{e.g.}, \cite{book:Troutman}):

\begin{corollary}[the second Euler-Lagrange equation]
If $q$ is a local minimizer of the classical
functional \eqref{eq:cp} of the calculus of variations, then
\begin{multline*}
\frac{d}{dt}\left[-L(t,q(t), \dot{q}(t))
+ \partial_3L(t,q(t), \dot{q}(t))\cdot \dot{q}(t))\right]\\
= -\partial_1 L (t,q(t), \dot{q}(t))
\end{multline*}
holds for all $t \in [a,b]$.
\end{corollary}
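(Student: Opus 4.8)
The plan is to obtain this classical identity as the immediate specialization of Theorem~\ref{secondEL} to the time scale $\mathbb{T} = \mathbb{R}$, so that the entire argument reduces to reading off the time-scale data when $\mathbb{T} = \mathbb{R}$. First I would recall from Section~\ref{sec:prel:ts} that for $\mathbb{T} = \mathbb{R}$ one has $\sigma(t) = t$ and $\mu(t) \equiv 0$, while the delta derivative agrees with the ordinary derivative, $q^\Delta(t) = \dot{q}(t)$, and $\frac{\Delta}{\Delta t} = \frac{d}{dt}$. In particular $q^\sigma(t) = q(t)$, so every evaluation $L(t, q^\sigma(t), q^\Delta(t))$ appearing in \eqref{2equationEL} collapses to the classical $L(t, q(t), \dot{q}(t))$.

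Next I would verify that the hypotheses transfer, namely that a local minimizer of the classical functional \eqref{eq:cp} is a local minimizer of problem \eqref{problem} in the time-scale sense, so that Theorem~\ref{secondEL} is applicable. When $\mathbb{T} = \mathbb{R}$, rd-continuity is ordinary continuity, hence $\mathrm{C}^1_{rd}$ coincides with $\mathrm{C}^1$ and the admissible class $\mathcal{D}$ is exactly that of \eqref{eq:cp}. Moreover, using $\sigma = \mathrm{id}$ and $q^\Delta = \dot{q}$, the norm in the definition of local minimizer becomes $\sup_t |y(t) - y_\ast(t)| + \sup_t |\dot{y}(t) - \dot{y}_\ast(t)|$, i.e.\ the usual $\mathrm{C}^1$-norm; hence the two notions of local minimizer coincide.

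Finally I would specialize the Hamiltonian. Setting $\mu(t) \equiv 0$ in
$$\mathcal{H}(t,u,v) = -L(t,u,v) + \partial_3 L(t,u,v) v + \partial_1 L(t,u,v)\mu(t)$$
removes the graininess term, leaving $\mathcal{H}(t,u,v) = -L(t,u,v) + \partial_3 L(t,u,v) v$, the classical Hamiltonian. Substituting this together with $\frac{\Delta}{\Delta t} = \frac{d}{dt}$, $q^\sigma = q$, and $q^\Delta = \dot{q}$ into \eqref{2equationEL} yields precisely the asserted equation on $[a,b]$.

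I do not anticipate a genuine obstacle: no analysis is required beyond quoting the $\mathbb{T} = \mathbb{R}$ values of $\sigma$, $\mu$, and the delta derivative. The only point meriting an explicit line is the equivalence of the two minimizer notions in the second step, which is what guarantees that the classical hypothesis is strong enough to invoke Theorem~\ref{secondEL}.
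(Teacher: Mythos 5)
Your proposal is correct and follows essentially the same route as the paper, which likewise obtains the corollary by specializing Theorem~\ref{secondEL} to $\mathbb{T}=\mathbb{R}$, where $\mu\equiv 0$, $\sigma=\mathrm{id}$, and the delta derivative reduces to the ordinary derivative. Your additional check that the classical and time-scale notions of local minimizer coincide is a sensible explicit verification of a point the paper leaves implicit.
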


In the autonomous case, Theorem~\ref{secondEL} gives
an extension of the classical
second Erdmann condition \eqref{eq:cla:2nd:Erd}:

\begin{corollary}[the second Erdmann condition on time scales]
If $q\in \mathcal{D}$ is a local minimizer of the problem
\begin{equation*}
I[q]=\int_a^b L(q^\sigma(t),q^\Delta(t)) \Delta t
\longrightarrow \min_{q\in \mathcal{D}},
\end{equation*}
then $q$ satisfies equation \eqref{eq:cons:Eng}
for all $t\in [a,b]_\T^\kappa$.
\end{corollary}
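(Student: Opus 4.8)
The plan is to obtain this result as an immediate specialization of Theorem~\ref{secondEL} to the autonomous setting, where the Lagrangian depends only on $(u,v)$ and not on the time variable $t$. First I would record the single structural fact that drives everything: since $L$ does not depend on its first argument, $\partial_1 L \equiv 0$. I would then trace this fact through both sides of equation \eqref{2equationEL}.

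Substituting $\partial_1 L \equiv 0$ into the time-scale Hamiltonian, the graininess term $\partial_1 L(t,u,v)\mu(t)$ drops out, so $\mathcal{H}$ collapses to the classical form $\mathcal{H}(u,v) = -L(u,v) + \partial_3 L(u,v) v$, with no correction depending on $\mu$. On the right-hand side of \eqref{2equationEL} the quantity $-\partial_1 L(t,q^\sigma(t),q^\Delta(t))$ is likewise zero. Hence Theorem~\ref{secondEL} reduces to
\[
\frac{\Delta}{\Delta t}\Bigl[-L(q^\sigma(t),q^\Delta(t)) + \partial_3 L(q^\sigma(t),q^\Delta(t))\, q^\Delta(t)\Bigr] = 0
\]
for every $t\in[a,b]_\T^\kappa$.

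It remains to conclude that the bracketed expression is constant on $[a,b]_\T$. This is the only step that is not purely formal, and I would therefore regard it as the crux of the argument, although it is a standard feature of the time-scale calculus rather than a genuine difficulty. I would establish it via Theorem~\ref{monotonia delta}: a function whose delta derivative is everywhere $\geq 0$ is non-decreasing, and one whose delta derivative is everywhere $\leq 0$ is non-increasing, so a function with $f^\Delta \equiv 0$ is simultaneously non-decreasing and non-increasing, and hence constant. Applying this to the bracket yields precisely the conservation law \eqref{eq:cons:Eng}, which completes the proof.
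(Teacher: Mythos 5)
Your proposal is correct and follows exactly the route the paper intends: the paper states this corollary without proof as an immediate specialization of Theorem~\ref{secondEL}, obtained by observing that $\partial_1 L \equiv 0$ in the autonomous case, so that both the graininess term in $\mathcal{H}$ and the right-hand side of \eqref{2equationEL} vanish. Your explicit justification of the final step (that a vanishing delta derivative forces constancy, via Theorem~\ref{monotonia delta}) is a detail the paper leaves implicit, but it is the right argument.
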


\begin{ex}
\label{Example1}
Let $\T$ be a time scale with $a,b \in \T$, $a < b$.
Consider problem (\ref{problem}) with $n=1$
and a Lagrangian $L$ given by
$L(t,q^\sigma,q^\Delta)=(q^\Delta)^2$.
The second Euler-Lagrange equation \eqref{2equationEL}
for this problem is
$$
\frac{\Delta}{\Delta t} ((q^\Delta (t))^2)=0,
$$
and the extremal is
$q(t)=ct+k$ with
$$
c=\frac{q_{b}-q_{a}}{b-a} \, ,
\quad  k=\frac{bq_{a}-a q_{b}}{b-a}.
$$
\end{ex}

\begin{ex}
Let $\mathbb{T} = \{0,\frac{1}{8},\frac{1}{4},\frac{3}{8},
\frac{1}{2},\frac{5}{8},\frac{3}{4},\frac{7}{8},1\}$
and consider the following problem on $\mathbb{T}$:
\begin{gather*}
I[q] = \int_0^1 \left[(q^\Delta(t))^2 - 1\right]^2 \Delta t \longrightarrow \min \, ,\\
q(0) = 0 \, , \quad q(1) = 0 \, , \\
q \in C_{rd}^1(\mathbb{T}; \mathbb{R}) \, .
\end{gather*}

The Euler-Lagrange equation \eqref{eq:el} takes the form
\begin{equation}
\label{eq:EL:nwar}
q^\Delta(t) \left[(q^\Delta(t))^2 - 1\right] = \text{const}
\end{equation}
while the second Euler-Lagrange equation \eqref{2equationEL} asserts that
\begin{equation}
\label{eq:2ndEL:nwar}
\left[(q^\Delta(t))^2 - 1\right] \left[1 + 3 (q^\Delta(t))^2\right] = \text{const} \, .
\end{equation}
Let $\tilde{q}(t) = 0$ for all $t \in \mathbb{T} \setminus \left\{\frac{1}{8}, \frac{7}{8}\right\}$,
and $\tilde{q}\left(\frac{1}{8}\right) = \tilde{q}\left(\frac{7}{8}\right) = \frac{1}{8}$.
One has $\tilde{q}^\Delta(0) = \tilde{q}^\Delta\left(\frac{3}{4}\right) = 1$,
$\tilde{q}^\Delta\left(\frac{1}{8}\right) = \tilde{q}^\Delta\left(\frac{7}{8}\right) = -1$,
and $\tilde{q}^\Delta\left(\frac{i}{8}\right) = 0$, $i = 2,3,4,5$.
We see that $\tilde{q}$ is an extremal, \textrm{i.e.}, it satisfies the
Euler-Lagrange equation \eqref{eq:EL:nwar}. However $\tilde{q}$ cannot
be a solution to the problem since it does not satisfy
the second Euler-Lagrange equation \eqref{eq:2ndEL:nwar}.
In fact, any function $q$ satisfying $q^\Delta(t) \in \{-1,0,1\}$, $t \in \mathbb{T}^\kappa$,
is an Euler-Lagrange extremal. Among them, only $q^\Delta (t) = 0$ for all $t \in \mathbb{T}^\kappa$
and those with $q^\Delta(t) = \pm 1$ satisfy our condition \eqref{eq:2ndEL:nwar}.
This example shows a problem for which the Euler-Lagrange equation gives
several candidates which are not the solution to the problem, while
our second Euler-Lagrange equation gives a smaller set of candidates.
Moreover, the candidates obtained from our condition lead us directly
to the explicit solution of the problem. Indeed, the null function and any function $q$ with $q(0) = q(1) = 0$ and
$q^\Delta(t) = \pm 1$, $t \in \mathbb{T}^\kappa$, gives $I[q] = 0$.
They are minimizers because $I[q] \ge 0$ for any function $q \in C_{rd}^1$.
\end{ex}


\section{AN APPLICATION: NOETHER'S THEOREM}
\label{sec:appl:NT}

Let $U=\{q \  | \  q:[a,b]_\T \rightarrow \mathbb{R}^n$,
$q \in C^1_{rd}\}$, and consider a one-parameter
family of infinitesimal transformations
\begin{equation}
\label{eq:tinf2}
\begin{cases}
\bar{t} = T(t,q, \epsilon) = t + \epsilon\tau(t,q) + o(\epsilon) \, ,\\
\bar{q} = Q(t,q, \epsilon) = q + \epsilon\xi(t,q) + o(\epsilon) \, ,\\
\end{cases}
\end{equation}
where $\epsilon \in \mathbb{R}$, $\tau:[a,b]_\T \times \mathbb{R}^n\rightarrow\mathbb{R}$,
and $\xi:[a,b]_\T \times \mathbb{R}^n\rightarrow\mathbb{R}$
are delta differentiable functions.

We assume that for every $q\in U$ and every $\epsilon$, the map
$[a,b]\ni t \mapsto \alpha(t):= T(t,q(t), \epsilon)\in\mathbb{R}$ is a
strictly increasing $\mathrm{C}^1_{rd}$ function and its image is
again a time scale with the forward shift operator $\bar{\sigma}$
and the delta derivative $\bar{\Delta}$. We recall that  the
following holds:
\begin{equation*}
\bar{\sigma}\circ\alpha = \alpha \circ \sigma .
\end{equation*}

\begin{definition}
\label{def. invariance}
Functional $I$ in \eqref{problem} is said
to be invariant on $U$ under the family of
transformations \eqref{eq:tinf2} if
$$
\frac{d}{d\epsilon}\left\{L\left(T(t,q(t), \epsilon),
Q^{\sigma}(t,q(t), \epsilon), \frac{Q^{\Delta}}{T^{\Delta}}\right)
T^{\Delta}\right\}\Big|_{\epsilon=0} = 0 ,
$$
where, for simplicity of notation, we omit the arguments of functions
$T^{\Delta}$ and $Q^{\Delta}$:
$T^{\Delta} = T^{\Delta}(t,q(t), \epsilon)$,
$Q^{\Delta} = Q^{\Delta}(t,q(t),\epsilon)$.
\end{definition}

\begin{remark}
Note that the invariance notion presented
in \cite[Definition~5]{NT:ts}
implies Definition~\ref{def. invariance}.
Indeed, for any subinterval $[t_a,t_b]_\T\subseteq [a,b]_\T$,
any $q \in U$, and any $\epsilon$, one has
\begin{equation*}
\begin{split}
\int_{t_a}^{t_b} & L(t,q^{\sigma}(t), q^{\Delta}(t))\Delta t\\
&= \int_{\alpha(t_a)}^{\alpha(t_b)} L(\overline{t},\overline{q}
\circ \overline{\sigma}(\overline{t}),\overline{q}^{\overline{\Delta}}(\overline{t}))
\overline{\Delta}\overline{t}\\
&= \int_{t_a}^{t_b} L\left(\alpha(t),\left(\overline{q}
\circ \overline{\sigma}\circ \alpha\right) (t),
\overline{q}^{\overline{\Delta}}(\alpha(t))\right) \alpha^{\Delta}(t) \Delta t\\
&= \int_{t_a}^{t_b} L\left(\alpha(t),\left(\overline{q}\circ \alpha \circ
\sigma\right)(t), \frac{(\overline{q} \circ \alpha)^{\Delta}(t)}{\alpha^{\Delta}(t)}\right)
\alpha^{\Delta}(t) \Delta t\\
&= \int_{t_a}^{t_b} L\left(T(t,q(t),\epsilon),Q^{\sigma}(t,q(t),\epsilon),
\frac{Q^{\Delta}}{T^{\Delta}}\right)
T^{\Delta} \Delta t \, .
\end{split}
\end{equation*}
From the arbitrariness of $t_a$ and $t_b$ it follows that
$$
L\left(T(t,q(t),\epsilon),Q^{\sigma}(t,q(t),\epsilon),
\frac{Q^{\Delta}}{T^{\Delta}}\right)
T^{\Delta} = L(t,q^{\sigma}(t), q^{\Delta}(t))\, ,
$$
and this implies
$$
\frac{d}{d\epsilon}\left\{L\left(T(t, q(t), \epsilon),
Q^{\sigma}(t, q(t), \epsilon), \frac{Q^{\Delta}}{T^{\Delta}}\right)
T^{\Delta}\right\}\Big|_{\epsilon=0} = 0 .
$$
\end{remark}

\begin{lemma}
\label{thm:invariance}
Functional $I$ in \eqref{problem}
is invariant on $U$ under the family
of transformations \eqref{eq:tinf2} if and only if
\begin{multline*}
\partial_{1}L(t,q^{\sigma}(t), q^{\Delta}(t))\tau(t,q(t))
+ \partial_{2}L(t,q^{\sigma}(t), q^{\Delta}(t))\xi^{\sigma}(t,q(t))\\
+\partial_{3}L(t,q^{\sigma}(t), q^{\Delta}(t))\xi^{\Delta}(t,q(t))
+L(t,q^{\sigma}(t), q^{\Delta}(t))\tau^{\Delta}(t,q(t))\\
-q^{\Delta}(t)\partial_{3}L(t,q^{\sigma}(t), q^{\Delta}(t))\tau^{\Delta}(t,q(t))=0
\end{multline*}
for all $t \in [a,b]_\T^\kappa$ and all $q \in U$,
where
$$
\xi^{\sigma}(t,q(t))=\xi(\sigma(t), q(\sigma(t))) \, ,
\quad
\xi^{\Delta}(t,q(t))=\frac{\Delta}{\Delta t}\xi(t,q(t))\, .
$$
\end{lemma}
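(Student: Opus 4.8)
The plan is to observe that the asserted identity is simply the explicit form of the invariance condition in Definition~\ref{def. invariance}, so that proving the equivalence reduces to computing the derivative
\[
\frac{d}{d\epsilon}\left\{L\left(T,Q^\sigma,\frac{Q^\Delta}{T^\Delta}\right)T^\Delta\right\}\Big|_{\epsilon=0}
\]
in closed form. Since this derivative is taken with respect to the real parameter $\epsilon$, all the manipulations are ordinary (not time-scale) calculus: the product rule, the chain rule applied to $L$, and the quotient rule applied to $Q^\Delta/T^\Delta$. Once this derivative is shown to equal the left-hand side of the displayed equation, both implications of the ``if and only if'' follow at once, because invariance is by definition exactly the vanishing of this derivative.

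First I would record the zeroth- and first-order data at $\epsilon=0$ coming from the infinitesimal expansion \eqref{eq:tinf2}. Writing $T=T(t,q(t),\epsilon)$ and $Q=Q(t,q(t),\epsilon)$, one has $T|_{\epsilon=0}=t$, $Q|_{\epsilon=0}=q(t)$, $\partial_\epsilon T|_{\epsilon=0}=\tau(t,q(t))$, and $\partial_\epsilon Q|_{\epsilon=0}=\xi(t,q(t))$. Passing to forward shifts gives $Q^\sigma|_{\epsilon=0}=q^\sigma(t)$ and $\partial_\epsilon Q^\sigma|_{\epsilon=0}=\xi^\sigma(t,q(t))$, while passing to delta derivatives in $t$ gives $T^\Delta|_{\epsilon=0}=1$, $Q^\Delta|_{\epsilon=0}=q^\Delta(t)$, together with $\partial_\epsilon T^\Delta|_{\epsilon=0}=\tau^\Delta(t,q(t))$ and $\partial_\epsilon Q^\Delta|_{\epsilon=0}=\xi^\Delta(t,q(t))$.

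With these values in hand, I would differentiate the product. The quotient rule applied to the third slot yields $\partial_\epsilon(Q^\Delta/T^\Delta)|_{\epsilon=0}=\xi^\Delta(t,q(t))-q^\Delta(t)\tau^\Delta(t,q(t))$, since $T^\Delta=1$ at $\epsilon=0$. Feeding this, together with $\partial_\epsilon T=\tau$ and $\partial_\epsilon Q^\sigma=\xi^\sigma$, into the chain rule for $L$, and then adding the term $L\cdot\partial_\epsilon T^\Delta=L\,\tau^\Delta$ produced by the product rule, collapses the expression (all $\partial_i L$ now evaluated at $(t,q^\sigma(t),q^\Delta(t))$) precisely into
\[
\partial_1 L\,\tau+\partial_2 L\,\xi^\sigma+\partial_3 L\,\xi^\Delta+L\,\tau^\Delta-q^\Delta\partial_3 L\,\tau^\Delta,
\]
which is the left-hand side of the claimed equation; demanding that it vanish for all $t\in[a,b]_\T^\kappa$ and all $q\in U$ is exactly the invariance condition.

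The step I expect to be the genuine obstacle is justifying the two commutations $\partial_\epsilon(T^\Delta)|_{\epsilon=0}=(\partial_\epsilon T)^\Delta|_{\epsilon=0}=\tau^\Delta$ and $\partial_\epsilon(Q^\Delta)|_{\epsilon=0}=\xi^\Delta$, that is, that the derivative in the real variable $\epsilon$ and the delta derivative in the time-scale variable $t$ may be interchanged. This is where the standing hypotheses that $\tau$ and $\xi$ are delta differentiable and that $t\mapsto T(t,q(t),\epsilon)$ is a $\mathrm{C}^1_{rd}$ map must be invoked carefully; once these interchanges are secured, the remaining computation is purely mechanical.
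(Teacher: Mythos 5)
Your proposal is correct and follows essentially the same route as the paper: both record the first-order data $\partial_\epsilon T|_{\epsilon=0}=\tau$, $\partial_\epsilon Q^\sigma|_{\epsilon=0}=\xi^\sigma$, $\partial_\epsilon(Q^\Delta/T^\Delta)|_{\epsilon=0}=\xi^\Delta-q^\Delta\tau^\Delta$, $\partial_\epsilon T^\Delta|_{\epsilon=0}=\tau^\Delta$, and then expand the invariance condition by the chain and product rules. Your closing remark about justifying the interchange of $\partial_\epsilon$ with the delta derivative is a point of rigor the paper passes over silently, but it does not change the argument.
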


\begin{proof}
Since
\begin{equation*}
\begin{gathered}
\frac{\partial T(t,q(t),\epsilon)}{\partial\epsilon}\Big|_{\epsilon=0}
= \tau(t,q(t)) \, ,\\
\frac{\partial Q^{\sigma}(t,q(t),\epsilon)}{\partial\epsilon}\Big|_{\epsilon=0}
= \xi^{\sigma}(t,q(t)) \, ,\\
\frac{\partial}{\partial\epsilon}\left(
\frac{Q^{\Delta}(t,q(t),\epsilon)}{T^{\Delta}(t,q(t),\epsilon)}\right)\Big|_{\epsilon=0}
= \xi^{\Delta}(t,q(t))-q^{\Delta}(t)\tau^{\Delta}(t,q(t)) \, ,\\
\frac{\partial T^{\Delta}(t,q(t),\epsilon)}{\partial\epsilon}\Big|_{\epsilon=0}
= \tau^{\Delta}(t,q(t)) \, ,
\end{gathered}
\end{equation*}
the definition of invariance is equivalent to
\begin{multline*}
\partial_{1}L(t,q^{\sigma}(t), q^{\Delta}(t))\tau(t,q(t))\\
+\partial_{2}L(t,q^{\sigma}(t), q^{\Delta}(t))\xi^{\sigma}(t,q(t))\\
+\partial_{3}L(t,q^{\sigma}(t), q^{\Delta}(t))\left(\xi^{\Delta}(t,q(t))
-q^{\Delta}(t)\tau^{\Delta}(t,q(t))\right)\\
+L(t,q^{\sigma}(t), q^{\Delta}(t))\tau^{\Delta}(t,q(t))=0 \, ,
\end{multline*}
which proves the desired result.
\end{proof}

\begin{ex}
\label{Example2}
For Example~\ref{Example1} one has invariance under the family
of transformations \eqref{eq:tinf2} with $\tau=r$
and $\xi=s$, where $r$ and $s$ are arbitrary constants.
\end{ex}

In order to simplify expressions, we write
$L(t,q^\sigma,q^\Delta)$ instead
of $L(t,q^\sigma(t),q^\Delta(t))$.
Similarly for the partial derivatives of $L$.
We recall that $q$ is an \emph{extremal} to problem
\eqref{problem} if it satisfies
the Euler-Lagrange equation \eqref{eq:el}.

\begin{theorem}[Noether's theorem on time scales]
\label{Noether}
If functional $I$ in \eqref{problem} is invariant on
$U$ in the sense of Definition~\ref{def. invariance}
(\textrm{cf.} Lemma~\ref{thm:invariance}), then
\begin{multline*}
\partial_{3}L(t,q^{\sigma},q^{\Delta})\cdot\xi(t,q)
+ \Bigl[L(t,q^{\sigma},q^{\Delta})
-\partial_{3}L(t,q^{\sigma},q^{\Delta})\cdot q^{\Delta}\\
-\partial_{1}L(t,q^{\sigma},q^{\Delta})\cdot\mu(t)\Bigr]\cdot\tau(t,q)
\end{multline*}
is constant along all the extremals of problem (\ref{problem}).
\end{theorem}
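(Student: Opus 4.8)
The plan is to recognise the candidate conserved quantity as
$C(t):=\partial_{3}L\cdot\xi(t,q)-\mathcal{H}(t,q^{\sigma},q^{\Delta})\,\tau(t,q)$,
where $\mathcal{H}$ is exactly the time-scale Hamiltonian of Theorem~\ref{secondEL}; indeed the bracketed factor $L-\partial_{3}L\cdot q^{\Delta}-\partial_{1}L\cdot\mu$ multiplying $\tau$ is precisely $-\mathcal{H}$. To prove that $C$ is constant along the extremals it suffices, by Theorem~\ref{monotonia delta} applied to $\pm C$, to show that $C^{\Delta}$ vanishes on $[a,b]_\T^{\kappa}$.

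First I would differentiate $C$ by the product rule, choosing the two available forms strategically so that the forward shifts land where the invariance identity of Lemma~\ref{thm:invariance} can absorb them. For $\partial_{3}L\cdot\xi$ I would use $(fg)^{\Delta}=f^{\Delta}g^{\sigma}+f\,g^{\Delta}$ and invoke the first Euler-Lagrange equation \eqref{eq:el}, $(\partial_{3}L)^{\Delta}=\partial_{2}L$, to obtain $\partial_{2}L\cdot\xi^{\sigma}+\partial_{3}L\cdot\xi^{\Delta}$. For $\mathcal{H}\cdot\tau$ I would instead use $(fg)^{\Delta}=f^{\Delta}g+f^{\sigma}g^{\Delta}$ together with the second Euler-Lagrange equation \eqref{2equationEL}, $\mathcal{H}^{\Delta}=-\partial_{1}L$, which yields $-\partial_{1}L\cdot\tau+\mathcal{H}^{\sigma}\tau^{\Delta}$. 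Collecting terms gives $C^{\Delta}=\partial_{1}L\cdot\tau+\partial_{2}L\cdot\xi^{\sigma}+\partial_{3}L\cdot\xi^{\Delta}-\mathcal{H}^{\sigma}\tau^{\Delta}$.

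Next I would substitute the invariance characterisation of Lemma~\ref{thm:invariance}, which lets me replace the first three terms by $(q^{\Delta}\partial_{3}L-L)\tau^{\Delta}$, so the expression collapses to $\bigl(q^{\Delta}\partial_{3}L-L-\mathcal{H}^{\sigma}\bigr)\tau^{\Delta}$. The concluding step is to show the coefficient of $\tau^{\Delta}$ is zero. Here the graininess term built into $\mathcal{H}$ is essential: by the definition of $\mathcal{H}$ one has $q^{\Delta}\partial_{3}L-L=\mathcal{H}-\mu\,\partial_{1}L$, while the identity $\mathcal{H}^{\sigma}=\mathcal{H}+\mu\,\mathcal{H}^{\Delta}=\mathcal{H}-\mu\,\partial_{1}L$ follows from \eqref{2equationEL}; the two $\mu\,\partial_{1}L$ contributions cancel exactly, so $C^{\Delta}=0$.

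The main obstacle I expect is bookkeeping rather than conceptual: one must pick the correct form of the Leibniz rule for each product and keep scrupulous track of which factors carry a $\sigma$, because on a general time scale $\mathcal{H}^{\sigma}\neq\mathcal{H}$. The delicate point—and the reason the naive classical argument does not transfer verbatim—is that the discrepancy $\mathcal{H}^{\sigma}-\mathcal{H}=\mu\,\mathcal{H}^{\Delta}$ is nonzero, and it is precisely the extra $\partial_{1}L\cdot\mu$ term of the time-scale Hamiltonian that makes this discrepancy cancel, leaving $C^{\Delta}=0$ and hence $C$ constant along the extremals.
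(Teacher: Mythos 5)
Your argument is correct and is essentially the paper's own proof: both differentiate the candidate conserved quantity, apply the first Euler--Lagrange equation to the $\partial_{3}L\cdot\xi$ term, the second Euler--Lagrange equation to the Hamiltonian term, and then close with Lemma~\ref{thm:invariance}. The only difference is bookkeeping --- you take the Leibniz form producing $\mathcal{H}^{\sigma}\tau^{\Delta}$ and reduce it via $\mathcal{H}^{\sigma}=\mathcal{H}+\mu\mathcal{H}^{\Delta}$, whereas the paper produces $\tau^{\sigma}$ and reduces it via $\tau^{\sigma}=\tau+\mu\tau^{\Delta}$; the $\mu\,\partial_{1}L$ cancellation you highlight is the same one that occurs there.
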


\begin{proof}
We must prove that
\begin{multline*}
\mathcal{C} :=
\frac{\Delta}{\Delta t}\Bigl[
\partial_{3}L(t,q^{\sigma},q^{\Delta})\cdot\xi(t,q)\\
+\Bigl(L(t,q^{\sigma},q^{\Delta})
-\partial_{3}L(t,q^{\sigma},q^{\Delta})\cdot q^{\Delta}\\
-\partial_{1}L(t,q^{\sigma},q^{\Delta})\cdot\mu(t)\Bigr)\cdot\tau(t,q) \Bigr]
\end{multline*}
is equal to zero along all the extremals of problem
(\ref{problem}). We begin noting that
\begin{equation*}
\begin{split}
\mathcal{C} &= \frac{\Delta}{\Delta t}
\partial_{3}L(t,q^{\sigma},q^{\Delta})\cdot\xi^{\sigma}(t,q) +
 \partial_{3}L(t,q^{\sigma},q^{\Delta})\cdot \xi^{\Delta}(t,q)\\
& \quad + \frac{\Delta}{\Delta t}
\Bigl[L(t,q^{\sigma},q^{\Delta})-\partial_{3}L(t,q^{\sigma},q^{\Delta})
\cdot q^{\Delta}\\
& \qquad\quad - \partial_{1}L(t,q^{\sigma},q^{\Delta})\cdot\mu(t)\Bigr]
\cdot\tau^{\sigma}(t,q)\\
& \quad +
\Bigl[L(t,q^{\sigma},q^{\Delta})-\partial_{3}L(t,q^{\sigma},q^{\Delta})\cdot
q^{\Delta}\\
& \qquad\quad -\partial_{1}L(t,q^{\sigma},q^{\Delta})\cdot\mu(t)\Bigr]
\cdot\tau^{\Delta}(t,q) \, .
\end{split}
\end{equation*}
Using the first and second Euler-Lagrange equations \eqref{eq:el}
and \eqref{2equationEL}, respectively, we conclude that
$$
\begin{array}{rcl}
\mathcal{C} & = &
\partial_{2}L(t,q^{\sigma},q^{\Delta})\cdot\xi^{\sigma}(t,q) +
\partial_{3}L(t,q^{\sigma},q^{\Delta})\cdot \xi^{\Delta}(t,q) \\
& &\\
& + & \partial_{1}L(t,q^{\sigma},q^{\Delta}) \cdot\tau^{\sigma}(t,q)\\
& &\\
& + &  L(t,q^{\sigma},q^{\Delta})\cdot \tau^{\Delta}(t,q)
-\partial_{3}L(t,q^{\sigma},q^{\Delta})\cdot q^{\Delta} \cdot \tau^{\Delta}(t,q)\\
& &\\
& - & \partial_{1}L(t,q^{\sigma},q^{\Delta})\cdot\mu(t) \cdot\tau^{\Delta}(t,q) \, .
\end{array}
$$
Since $\tau^{\sigma}(t,q)
=\tau(t,q)+\mu(t)\cdot \tau^{\Delta}(t,q)$,
then
$$
\begin{array}{rcl}
\mathcal{C} & = &
\partial_{2}L(t,q^{\sigma},q^{\Delta})\cdot\xi^{\sigma}(t,q) +
\partial_{3}L(t,q^{\sigma},q^{\Delta})\cdot \xi^{\Delta}(t,q) \\
& &\\
& + & \partial_{1}L(t,q^{\sigma},q^{\Delta}) \cdot \tau(t,q)
+ \partial_{1}L(t,q^{\sigma},q^{\Delta}) \cdot \mu(t) \cdot \tau^{\Delta}(t,q) \\
& &\\
& + &  L(t,q^{\sigma},q^{\Delta})\cdot \tau^{\Delta}(t,q)
-\partial_{3}L(t,q^{\sigma},q^{\Delta})
\cdot q^{\Delta} \cdot \tau^{\Delta}(t,q)\\
& &\\
& - & \partial_{1}L(t,q^{\sigma},q^{\Delta})\cdot\mu(t)
\cdot\tau^{\Delta}(t,q) \, .
\end{array}
$$
Hence,
$$
\begin{array}{rcl}
\mathcal{C} & = &
\partial_{2}L(t,q^{\sigma},q^{\Delta})\cdot\xi^{\sigma}(t,q) +
\partial_{3}L(t,q^{\sigma},q^{\Delta})\cdot \xi^{\Delta}(t,q) \\
& &\\
& + & \partial_{1}L(t,q^{\sigma},q^{\Delta})
\cdot \tau(t,q) + L(t,q^{\sigma},q^{\Delta})\cdot \tau^{\Delta}(t,q)\\
& &\\
& - &\partial_{3}L(t,q^{\sigma},q^{\Delta})\cdot q^{\Delta}
\cdot \tau^{\Delta}(t,q) \, .
\end{array}
$$
Using Lemma~\ref{thm:invariance} we arrive
at the intended conclusion.
\end{proof}

If $\mathbb{T}=\mathbb{R}$, then $\mu=0$ and Theorem~\ref{Noether}
reduces to the classical Noether's theorem
(\textrm{cf.}, \textrm{e.g.}, \cite{book:Logan}):

\begin{corollary}[Noether's theorem]
If the classical fundamental functional
of the calculus of variations \eqref{eq:cp}
is invariant, then
$$
\partial_{3}L(t,q,\dot{q})\cdot\xi(t,q)
+ \left[L(t,q,\dot{q})-\partial_{3}L(t,q,\dot{q}) \cdot \dot{q}
\right]\cdot\tau(t,q)
$$
is constant along all the extremals of the problem.
\end{corollary}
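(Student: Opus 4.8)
The plan is to derive this corollary directly from Theorem~\ref{Noether} by specializing to $\mathbb{T}=\mathbb{R}$, since the classical functional \eqref{eq:cp} is precisely the case $\mathbb{T}=\mathbb{R}$ of problem \eqref{problem}. First I would set up the dictionary between the time-scale objects and their classical counterparts on $\mathbb{T}=\mathbb{R}$, as recorded in Section~\ref{sec:prel:ts}: here $\sigma(t)=t$ and $\mu(t)\equiv 0$, while the delta derivative coincides with the ordinary derivative, $q^{\Delta}(t)=\dot q(t)$. Consequently $q^{\sigma}(t)=q(t)$ and $\frac{\Delta}{\Delta t}=\frac{d}{dt}$. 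Substituting these identifications into the conserved quantity furnished by Theorem~\ref{Noether}, the graininess term $-\partial_{1}L(t,q^{\sigma},q^{\Delta})\cdot\mu(t)$ vanishes identically, and the expression collapses to
$$
\partial_{3}L(t,q,\dot q)\cdot\xi(t,q)
+\bigl[L(t,q,\dot q)-\partial_{3}L(t,q,\dot q)\cdot\dot q\bigr]\cdot\tau(t,q),
$$
which is exactly the quantity in the statement.

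Second, I would verify that the hypotheses and the notion of extremal transfer correctly. The invariance assumed in Theorem~\ref{Noether} is that of Definition~\ref{def. invariance}; on $\mathbb{T}=\mathbb{R}$ the operators $T^{\Delta}$ and $Q^{\Delta}$ reduce to the ordinary derivatives $\dot T$ and $\dot Q$ and $Q^{\sigma}=Q$, so that condition becomes $\frac{d}{d\epsilon}\bigl\{L\bigl(T,Q,\dot Q/\dot T\bigr)\dot T\bigr\}\big|_{\epsilon=0}=0$, i.e. the classical invariance of \eqref{eq:cp}. Likewise, an extremal is by definition a solution of the Euler-Lagrange equation \eqref{eq:el}, which for $\mathbb{T}=\mathbb{R}$ is the classical equation $\frac{d}{dt}\partial_{3}L=\partial_{2}L$. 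Thus the hypothesis ``the classical functional is invariant'' is precisely the $\mathbb{T}=\mathbb{R}$ instance of the hypothesis of Theorem~\ref{Noether}, and ``constant along all the extremals'' carries the same meaning on both sides.

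With this dictionary in place, the conclusion of Theorem~\ref{Noether} reads off immediately: the displayed expression is $\frac{d}{dt}$-constant along every classical extremal, which is the assertion. I do not expect a genuine analytic obstacle; the only point needing care is purely definitional, namely confirming that the time-scale invariance condition of Definition~\ref{def. invariance} and the time-scale Euler-Lagrange equation \eqref{eq:el} each collapse to their classical forms when $\mathbb{T}=\mathbb{R}$. Alternatively, one could bypass Theorem~\ref{Noether} and rerun the classical Noether computation, combining the invariance identity with $\frac{d}{dt}\partial_{3}L=\partial_{2}L$ and the classical second Euler-Lagrange equation; but the specialization route above is the shortest. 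Once the reduction $\mu\equiv 0$ is noted, the result follows.
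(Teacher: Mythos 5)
Your proposal matches the paper's own argument: the corollary is obtained exactly by specializing Theorem~\ref{Noether} to $\mathbb{T}=\mathbb{R}$, where $\mu\equiv 0$, $\sigma(t)=t$, and $q^{\Delta}=\dot q$, so the graininess term drops and the conserved quantity reduces to the classical one. The paper gives no further detail, and your additional checks that the invariance notion and the Euler-Lagrange extremals collapse to their classical forms are the right (purely definitional) points to verify.
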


\begin{ex}
\label{Example3}
For the problem of Example~\ref{Example2} one has from
Theorem~\ref{Noether} that
\begin{equation}
\label{cons:law:ex}
2sq^\Delta - r(q^\Delta)^2=const
\end{equation}
along the extremals $q$ of the problem. This is indeed
true: from Example~\ref{Example1} we know that the extremals
have the form $q(t)=ct+k$ for some constants $c, k \in
\mathbb{R}$; thus, the conservation law
\eqref{cons:law:ex} takes the form
$2sc-rc^2=const$.
\end{ex}


\section{CONCLUSION AND FUTURE WORK}

In this paper we obtain a
second Euler-Lagrange equation
and a second Erdmann condition
for the problem of the calculus of variations
on time scales. Since both necessary optimality conditions are
important and extremely useful results in the calculus of variations
and optimal control when $\T = \R$,
we claim that the present results
are also useful for the development of the
recent theory of the calculus of variations
on time scales \cite{Tangier-with-Rui}.
As pointed out to us by Richard Vinter,
our second Euler-Lagrange equation
in the time scales setting seems to be useful
in a framework for studying the asymptotics
of time discretization.

As an example of application of our main results,
we give a simpler and more elegant proof to the
Noether symmetry theorem on time scales
obtained in 2008 \cite{NT:ts},
which allows to obtain conserved quantities
along the extremals of the problems.
Standard Noetherian constants of motion are violated due to the presence
of a new term that depends on the graininess $\mu(t)$ of the time scale,
while in the classical context $\mu(t) \equiv 0$.
The importance of Noether's conservation laws
in the calculus of variations,
optimal control theory, and its applications
in engineering, are well recognized
\cite{Terence,NonlinearDyn,Paulo,Torres02}.
Their role on the general context
of optimal control on time scales is an entirely open area of research.
In particular, it would be interesting to investigate the techniques
of \cite{Torres02,Torres04} with the recent higher-order Euler-Lagrange
equations on time scales \cite{comRui:TS:Lisboa07,NataliaHigherOrderNabla}
for a possible extension of Theorem~\ref{Noether} to variational problems
on time scales with higher-order delta or nabla derivatives.
The question of obtaining conserved quantities
along the extremals of higher-order problems of the calculus of variations
on time scales remains an interesting open question.


\section{ACKNOWLEDGMENTS}

Zbigniew Bartosiewicz was supported by Bia{\l}ystok University of Technology
grant S/WI/1/08; Nat\'{a}lia Martins and Delfim F. M. Torres by the R\&D
unit ``Centre for Research in Optimization and Control'' (CEOC)
of the University of Aveiro, cofinanced by the European Community Fund
FEDER/POCI 2010.



\end{document}